\setlist{labelindent=1pt,itemsep=0.1cm}
\setlist[itemize]{leftmargin=0.7cm}
\setlist[enumerate]{itemindent=0em,leftmargin=0.7cm}
\begin{document}
\title*{Unique common fixed point results for four weakly commuting maps in $G$-metric spaces}
\titlerunning{Unique common fixed point results for weakly commuting maps} 
\author{Talat Nazir \and Sergei Silvestrov}
\authorrunning{T. Nazir, S. Silvestrov} 

\institute{Talat Nazir,
\at
Department of Mathematical Sciences, University of South Africa, Florida 0003, South Africa. \\
\email{talatn@unisa.ac.za}
\and
Sergei Silvestrov,
\at Division of Mathematics and Physics, School of Education, Culture and Communication, M{\"a}lardalen University, Box 883, 72123 V{\"a}ster{\aa}s, Sweden. \\  \email{sergei.silvestrov@mdu.se}
\and }
%
%


\maketitle

\abstract*{Using the setting of $G$-metric spaces, common fixed point theorems for four maps satisfying the weakly commuting conditions are obtained for various generalized contractive conditions. Several examples are also presented to show the validity of main results.
\keywords{weakly commuting maps, common fixed point, generalized contraction, $G$-metric space}\\
{\bf MSC 2020 Classification:} 54H25, 47H10, 54E50}

\abstract{Using the setting of $G$-metric spaces, common fixed point theorems for four maps satisfying the weakly commuting conditions are obtained for various generalized contractive conditions. Several examples are also presented to show the validity of main results.
\keywords{weakly commuting maps, common fixed point, generalized contraction, $G$-metric space}\\
{\bf MSC 2020 Classification:} 54H25, 47H10, 54E50}

\section{Introduction}
\label{secNaSe:IntroductionG-metricspace}
The study of unique common fixed points of mappings satisfying certain
contractive conditions has been at the center of strong research activity.
Mustafa and Sims \cite{ZM01} generalized the concept of a metric space.
Based on the notion of generalized metric spaces, Mustafa {\it et al.}
\cite{ZM02, ZM03, ZM04, ZM05, ZMe} obtained some fixed point theorems for
mappings satisfying different contractive conditions. Study of common fixed
point theorems in generalized metric spaces was initiated by Abbas and
Rhoades \cite{Abbas}. Also, Abbas {\it et al.} \cite{Abbas2} obtained some
periodic point results in generalized metric spaces. Saadati {\it et al.}
\cite{Saadati} studied some fixed point results for contractive mappings in
partially ordered $G$-metric spaces. Shatanawi \cite{Wasfi} obtained fixed
points of $\Phi $-maps in $G$-metric spaces. Also, Shatanawi \cite{Wasfi2}
obtained a coupled coincidence fixed point theorem in the setting of a
generalized metric spaces for two mappings $F$ and $g$ under certain
conditions with an assumption of $G$-metric spaces continuity of one of the mapping
involved therein. While, Chugh {\it et al.} \cite{Chugh} obtained some fixed
point results for maps satisfying property $p$ in $G$-metric space. Several other useful results in $G$-metric space appeared in
\cite{Abbasnz,Abbas13,KhanAbbas,Latif2018,LatifNazir} and also in the book \cite{AgarvalKarapinarOReganRLHierroBook2018FPTMetrTypeSp} and reference therein.

The aim of this work is to initiate the study of common fixed
point for four maps satisfying weakly commuting conditions in $G$-metric space.

Throughout this work, $\mathbb{R},$ $\mathbb{R}_{\geq 0}$ and $\mathbb{Z}_{\geq 0}$ will denote the set of all real numbers, the set of all nonnegative real
numbers and the set of all non-negative integers, respectively. Consistent with Mustafa and Sims \cite{ZM02,ZM03,ZM04}, the following
definitions and results will be needed in the sequel.

\begin{definition}
	\label{DefNaSeG:1.1} Let $X$ be a nonempty set. Suppose that
	$G:X\times X\times X\rightarrow \mathbb{R}_{\geq 0}$ is a mapping satisfying
	\begin{enumerate}[leftmargin=1cm]
		\item[{\rm (${G}_{1}$)}] \hspace{0,1cm} $G(x,y,z)=0$ if $x=y=z;$
		\item[{\rm (${G}_{2}$)}] \hspace{0,1cm} $0<G(x,y,z)$ for all $x,y\in X,$ with $x\neq y;$
		\item[{\rm (${G}_{3}$)}] \hspace{0,1cm} $G(x,x,y)\leq G(x,y,z)$ for all $x,y,z\in X,$ with
        $y\neq z;$
		\item[{\rm (${G}_{4}$)}] \hspace{0,1cm} $G(x,y,z)=G(x,z,y)=G(y,z,x)=\ldots,$
        (symmetry in all variables); 		
		\item[{\rm (${G}_{5}$)}] \hspace{0,1cm} $G(x,y,z)\leq G(x,a,a)+G(a,y,z)$ for all $x,y,z,a\in
		X. $
\end{enumerate}
	Then $G$ is called a $G$-metric on $X$ and $(X,G)$ is called a
$G$-metric space.
\end{definition}
\begin{example} {\rm (\cite{ZM02})}
	\label{ExamNaSeGM:1.2}
If $X$ is a non-empty subset and $d:X\times X\rightarrow
\mathbb{R}_{\geq 0}$ is a  metric on $X$, then the functions $G_{1},G_{2}:X\times
X\times X\rightarrow
\mathbb{R}_{\geq 0}$, given by
\begin{align*}
& G_{1}\left( x,y,z\right) = d\left( x,y\right) +d\left( y,z\right) +d\left(z,x\right), \\
& G_{2}\left( x,y,z\right) = \max \{d\left( x,y\right) ,d\left(y,z\right),d\left(z,x\right) \}
\end{align*}
are $G$-metrics on $X$.
\end{example}
\begin{example}{\rm (\cite{AgarvalKarapinarOReganRLHierroBook2018FPTMetrTypeSp})} \label{ExamNaSeGM:1.3}  Every non-empty set $X$ can be provided with the discrete $G$-metric, which is defined, for all $x,y,z\in X$, by 
\[
G(x,y,z)=\left \{
\begin{array}{cc}
	0 & \text{if }x=y=z \\
	1 & \text{otherwise.}
\end{array}
\right.
\]
\end{example}

\begin{example} {\rm (\cite{AgarvalKarapinarOReganRLHierroBook2018FPTMetrTypeSp})}
\label{ExamNaSeGM:1.4}
Let $X=\mathbb{R}_{\geq 0}$ be the interval of nonnegative real numbers. Then the function $G:X\times X\times X\rightarrow
\mathbb{R}_{\geq 0}$, given by
\[
G(x,y,z)=\left \{
\begin{array}{cc}
	0 & \text{if }x=y=z \\
	\max \{x,y,z\} & \text{otherwise.}%
\end{array}\right.
\]
is a $G$-metric on $X$.
	\end{example}

\begin{proposition}
For a given $G$-metric space $(X,G)$,
\begin{enumerate}[leftmargin=1cm]
	\item[$($E$_{1})$] \hspace{0,3cm} $G_{1}(r,u,s)=\kappa G(r,u,s)\}$ for some $\kappa >0,$
    \item[$($E$_{2})$] \hspace{0,3cm} $G_{1}(r,u,s)=\min \{ \kappa ,G(r,u,s)\}$ for some $\kappa >0,$		
    \item[$($E$_{3})$] \hspace{0,3cm} $G_{2}(r,u,s)=\dfrac{G(r,u,s)}{1+G(r,u,s)}$
	\end{enumerate}
are also $G$-metrics on $X.$ Further, if $X=\bigcup\limits_{i=1}^{n}A_{i}$ is any
	partition of $X$ and $\kappa >0,$ then
\begin{enumerate}[leftmargin=1cm]
		\item[$($E$_{4})$] \hspace{0,3cm} $G_{3}(r,u,s)=\left \{
		\begin{array}{l}
			G(r,u,s),\text{ if for some }i\text{ we have }r,u,s\in A_{i} \\
			\kappa +G(r,u,s),\text{ otherwise}%
		\end{array}		\right. $
\end{enumerate}
is a $G$-metric on $X$.
\end{proposition}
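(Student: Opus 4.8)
The plan is to verify the five axioms $(G_1)$–$(G_5)$ for each of the four constructions, isolating the routine work so it is done once. For $(E_1)$ nothing is needed beyond the remark that multiplying by a fixed $\kappa>0$ preserves every defining property: $(G_1)$ and $(G_4)$ are immediate, $(G_2)$ holds because $\kappa\,G(r,u,s)>0$ whenever $r\neq u$, and $(G_3)$, $(G_5)$ follow by multiplying the corresponding inequalities for $G$ by $\kappa$.

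For $(E_2)$ and $(E_3)$ I would first record the elementary lemma: if $\varphi:\mathbb{R}_{\geq 0}\to\mathbb{R}_{\geq 0}$ is nondecreasing, subadditive (meaning $\varphi(s+t)\le\varphi(s)+\varphi(t)$), and satisfies $\varphi(t)=0\iff t=0$, then $\varphi\circ G$ is again a $G$-metric on $X$. Indeed $(G_1)$ and the ``$=0$'' direction of $(G_2)$ come from $\varphi(0)=0$, the ``$>0$'' direction from $\varphi(t)>0$ for $t>0$, $(G_3)$ from applying monotonicity to $G(r,r,u)\le G(r,u,s)$, $(G_4)$ from symmetry of $G$, and $(G_5)$ from
\[
\varphi(G(r,u,s))\le\varphi\big(G(r,a,a)+G(a,u,s)\big)\le\varphi(G(r,a,a))+\varphi(G(a,u,s)),
\]
using monotonicity and then subadditivity. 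It then remains to check the hypotheses for $\varphi(t)=\min\{\kappa,t\}$ and for $\varphi(t)=t/(1+t)$. Monotonicity and the zero condition are clear in both cases; subadditivity of $\min\{\kappa,\cdot\}$ follows from a short case split on whether $s+t\le\kappa$, and subadditivity of $t/(1+t)$ from its concavity together with its vanishing at the origin (a concave function on $[0,\infty)$ that is $0$ at $0$ is subadditive).

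The genuinely new verification is $(E_4)$. Throughout one uses that $G_3\ge G$ pointwise and that, since the $A_i$ form a \emph{partition}, each point of $X$ lies in exactly one cell. For $(G_1)$: if $r=u=s$ they share a cell, so $G_3(r,u,s)=G(r,u,s)=0$. For $(G_2)$: if $r\neq u$ then $G(r,u,s)>0$, hence $G_3(r,u,s)\ge G(r,u,s)>0$. For $(G_3)$: if $r,u,s$ share a cell so do $r,r,u$, and the inequality reduces to that for $G$; otherwise $G_3(r,u,s)=\kappa+G(r,u,s)\ge\kappa+G(r,r,u)\ge G_3(r,r,u)$. Symmetry $(G_4)$ is clear since the ``same cell'' condition is symmetric in $r,u,s$.

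For $(G_5)$, which I expect to be the main obstacle, I would split on whether $r,u,s$ share a cell. If they do, then $G_3(r,u,s)=G(r,u,s)\le G(r,a,a)+G(a,u,s)\le G_3(r,a,a)+G_3(a,u,s)$. If they do not, then $G_3(r,u,s)=\kappa+G(r,u,s)$, and: if $a$ and $r$ lie in different cells then $G_3(r,a,a)=\kappa+G(r,a,a)$ and we are done; if $a$ and $r$ lie in the same cell, then — and this is the crucial point — $a,u,s$ cannot all lie in one cell, for that cell would then contain $r$ as well, contradicting that $r,u,s$ share no cell; hence $G_3(a,u,s)=\kappa+G(a,u,s)$ and again $G_3(r,a,a)+G_3(a,u,s)\ge\kappa+G(r,a,a)+G(a,u,s)\ge\kappa+G(r,u,s)$. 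This exhausts the cases. The only place any care is required is exactly this last sub-case, where the partition hypothesis is what forces the extra $\kappa$ onto the second summand; everything else is bookkeeping.
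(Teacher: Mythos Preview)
Your argument is correct in every part. The paper, however, states this proposition without proof (it is recorded as a standard fact immediately after the introductory examples), so there is no authorial proof to compare against. Your unified treatment of $(E_2)$ and $(E_3)$ via the lemma on nondecreasing subadditive $\varphi$ with $\varphi^{-1}(0)=\{0\}$ is a clean way to organize those two cases, and your case analysis for $(G_5)$ in $(E_4)$ correctly identifies the one place where the partition hypothesis does real work: when $a$ shares a cell with $r$, the triple $a,u,s$ is forced to straddle cells, which supplies the missing $\kappa$ on the right-hand side.
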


We can drive the following useful properties form a given $G$-metric on $X$.

\begin{proposition}[\cite{ZM04}]
	\label{GProp1}  For a given $G$-metric space $(X,G)$, the following properties hold for any
$r,u,s,y,x\in X$:
	\begin{enumerate}[label=\textup{(\roman*)}, leftmargin=1cm, ref=(\roman*)]
		\item \hspace{0,3cm} if $G(r,u,s)=0,$ then $r=s=u,$
		\item \hspace{0,3cm} $G(r,y,s)\leq G(r,r,y)+G(y,s,y),$
		\item \hspace{0,3cm} $G(r,r,u)\leq 2G(u,r,u),$
		\item \hspace{0,3cm} $G(r,u,s)\leq G(r,x,s)+G(x,u,s),$
		\item \hspace{0,3cm} $G(r,u,s)\leq \dfrac{2}{3}(G(r,u,y)+G(r,y,s)+G(y,u,s)),$
		\item \hspace{0,3cm} $G(r,u,s)\leq G(r,y,y)+G(y,u,y)+G(x,x,s),$
		\item \hspace{0,3cm} $\left \vert G(r,u,s)-G(r,u,y)\right \vert \leq \max \{ G(y,s,s),G(s,y,y)\},$
		\item \hspace{0,3cm} $\left \vert G(r,u,s)-G(r,u,x)\right \vert \leq G(r,x,s),$
		\item \hspace{0,3cm} $\left \vert G(r,u,s)-G(u,s,s)\right \vert \leq \max \{ G(r,s,s),G(s,r,r)\},$
		\item \hspace{0,3cm} $\left \vert G(r,y,y)-G(y,r,r)\right \vert \leq \max \{ G(y,r,r),G(r,y,y)\}.$
	\end{enumerate}
\end{proposition}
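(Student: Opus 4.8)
The plan is to derive all ten properties directly from the axioms $(G_{1})$--$(G_{5})$; the two workhorses are the rectangle inequality $(G_{5})$ and the full symmetry $(G_{4})$, which together let one ``insert'' an auxiliary point into a triple and then freely permute its arguments. I would start with (i), which is fastest: it is just the contrapositive of $(G_{2})$, since $G(r,u,s)=0$ forces $r=u$ (otherwise $G(r,u,s)>0$), and applying the same remark to the permuted value $G(r,u,s)=G(u,s,r)$ forces $u=s$ as well, so $r=u=s$.

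For the ``one-step'' inequalities (ii), (iii) and (vi) the recipe is always the same: apply $(G_{5})$ once or twice with a well-chosen auxiliary point, then use $(G_{4})$ to recognize that two of the resulting terms are permutations of one another and collapse them. For example, (iii) comes from $(G_{5})$ with auxiliary point $u$, giving $G(r,r,u)\le G(r,u,u)+G(u,r,u)$, together with $G(r,u,u)=G(u,r,u)$; (ii) comes from $(G_{5})$ with the middle argument $y$ as auxiliary point; and (vi) from applying $(G_{5})$ once to split off $G(r,y,y)$ and once more to the triple that remains. For the generalized triangle inequality (iv) I would apply $(G_{5})$ with auxiliary point $x$ to get $G(r,u,s)\le G(r,x,x)+G(x,u,s)$ and then invoke $(G_{3})$ -- with $s$ as the third argument there -- to enlarge $G(r,x,x)=G(x,x,r)$ to $G(r,x,s)$; the few degenerate coincidences of arguments for which the hypothesis of $(G_{3})$ fails are checked directly and are trivial. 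Property (v) then follows by pure bookkeeping: write down (iv) in the three configurations obtained from each other by $(G_{4})$ (inserting $y$ ``between'' each of the three pairs drawn from $r,u,s$) and add them; each of $G(r,u,y)$, $G(r,y,s)$, $G(y,u,s)$ appears exactly twice, which gives $3G(r,u,s)\le 2\bigl(G(r,u,y)+G(r,y,s)+G(y,u,s)\bigr)$.

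The remaining estimates (vii)--(x) are ``reverse triangle'' inequalities. Property (x) is immediate from the elementary fact that $|a-b|\le\max\{a,b\}$ whenever $a,b\ge 0$. For (vii) and (ix) the pattern is identical: to bound a difference of two $G$-values that agree in two of their three slots, apply $(G_{5})$ taking the point in which the two values differ as the auxiliary point; this produces exactly the claimed error term (a value of $G$ with a repeated argument), and carrying this out in both directions and taking the larger of the two bounds yields the stated maximum. Property (viii) is the same idea with one extra step: after $(G_{5})$ produces the error term ($G(x,x,s)$ or $G(s,s,x)$, according to the direction), one uses $(G_{3})$ exactly as in (iv) to bound it above by $G(r,x,s)$, again up to trivial degenerate cases.

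I expect the only real friction to be organizational rather than mathematical: $(G_{5})$ is \emph{asymmetric}, whereas several of the targets look symmetric, so one has to keep careful track of which permutation class a term belongs to (an argument appearing ``once'' versus ``twice'') when matching the two sides, and one must not overlook the handful of degenerate cases where $(G_{3})$ does not apply verbatim. Beyond $(G_{4})$ and $(G_{5})$ -- with $(G_{2})$ and $(G_{3})$ for the positivity and sharpening steps -- no further idea is needed.
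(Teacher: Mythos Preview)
The paper does not prove this proposition at all: it is quoted from \cite{ZM04} and stated without proof, so there is no argument in the text to compare yours against. Your plan is the standard one and is essentially what one finds in the original Mustafa--Sims papers --- each item is a short manipulation with $(G_{4})$ and $(G_{5})$, occasionally sharpened by $(G_{2})$ or $(G_{3})$, exactly as you outline.
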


\begin{definition}[\cite{ZM03}]
	\label{DefNaSeG:1.2}  A sequence $\{x_{n}\}$ in a $G$-metric
	space $X$ is	
	\begin{enumerate}[label=\textup{(\roman*)}, ref=(\roman*)]
		\item a $G$-{\it Cauchy} sequence if, for any $\varepsilon >0,$ there
		is a $n_{0} \in \mathbb{Z}_{\geq 0}$ such that for all $%
		n,m,l\geq n_{0},$ $G(x_{n},x_{m},x_{l})<\varepsilon ,$
		\item a $G$-{\it convergent} sequence if, for any $\varepsilon >0,$
		there is an $x\in X$ and a $n_{0}\in \mathbb{Z}_{\geq 0}$ such that for all $n,m\geq n_{0},$
		$G(x,x_{n},x_{m})<\varepsilon $.
	\end{enumerate}
\end{definition}
\begin{definition}[\cite{ZM04}]
	\label{DefNaSeG:1.3}  A $G$-metric space on $X$ is said to be $G$-complete if every
$G$-Cauchy sequence in $X$ is $G$-convergent in $X.$ It is known that $\{x_{n}\}$ $G$-converges to $x\in X$ if and only if $G(x_{m},x_{n},x)%
	\rightarrow 0$ as $n,m\rightarrow \infty .$
\end{definition}

\begin{proposition}[\cite{ZM02}] \label{PropNaSeG:1.3}  Let $X$ be a $G$-metric
	space. Then the following properties are equivalent:
	
	\begin{enumerate} [label=\textup{\arabic*)}, ref=\arabic*]
		\item $\{x_{n}\}$ is $G$-convergent to $x$.		
		\item $G(x_{n},x_{n},x)\rightarrow 0$ as $n\rightarrow \infty .$		
		\item $G(x_{n},x,x)\rightarrow 0$ as $n\rightarrow \infty .$		
		\item $G(x_{n},x_{m},x)\rightarrow 0$ as $n,m\rightarrow \infty .$
	\end{enumerate}
\end{proposition}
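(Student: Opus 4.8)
The plan is to establish the four-way equivalence through the cyclic chain of implications $1)\Rightarrow 4)\Rightarrow 2)\Rightarrow 3)\Rightarrow 1)$, each link being essentially a one-line estimate that uses only the axioms $(G_{1})$--$(G_{5})$ together with the elementary consequences collected in Proposition~\ref{GProp1}.

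First, $1)\Rightarrow 4)$ is immediate from Definition~\ref{DefNaSeG:1.2}: $G$-convergence of $\{x_{n}\}$ to $x$ means precisely that $G(x,x_{n},x_{m})\to 0$ as $n,m\to\infty$, and by the full symmetry axiom $(G_{4})$ this quantity equals $G(x_{n},x_{m},x)$, which is statement $4)$. Then $4)\Rightarrow 2)$ follows simply by taking $m=n$ in $4)$, giving $G(x_{n},x_{n},x)\to 0$.

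For $2)\Rightarrow 3)$ I would invoke Proposition~\ref{GProp1}(iii), namely $G(r,r,u)\le 2G(u,r,u)$, with $r=x$ and $u=x_{n}$; rewriting both sides using $(G_{4})$ turns this into $G(x_{n},x,x)\le 2\,G(x_{n},x_{n},x)$, whose right-hand side tends to $0$ by $2)$. Finally, $3)\Rightarrow 1)$ is the only step where the rectangle-type inequality is genuinely needed: applying $(G_{5})$ with the substitution $x\mapsto x_{n}$, $y\mapsto x_{m}$, $z\mapsto x$, $a\mapsto x$, and then $(G_{4})$, one obtains
\[
G(x_{n},x_{m},x)\le G(x_{n},x,x)+G(x,x_{m},x)=G(x_{n},x,x)+G(x_{m},x,x),
\]
and both summands go to $0$ as $n,m\to\infty$ by $3)$; a final use of $(G_{4})$ rewrites this as $G(x,x_{n},x_{m})\to 0$, which is statement $1)$.

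I do not anticipate a real obstacle here, since every step is routine; the only points deserving care are the bookkeeping of the symmetry axiom $(G_{4})$ — each time one must check that the precise permutation of arguments produced by $(G_{5})$ or by Proposition~\ref{GProp1}(iii) is indeed the one required — and the single invocation of $(G_{5})$ that upgrades the ``one-index'' smallness in $3)$ to the ``two-index'' smallness demanded by $1)$ and $4)$.
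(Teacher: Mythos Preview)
Your cyclic argument $1)\Rightarrow 4)\Rightarrow 2)\Rightarrow 3)\Rightarrow 1)$ is correct: each implication follows exactly as you say from the symmetry axiom $(G_{4})$, the specialization $m=n$, Proposition~\ref{GProp1}(iii), and the rectangle inequality $(G_{5})$, respectively. The paper itself does not supply a proof of this proposition---it is quoted from \cite{ZM02} as a known preliminary result---so there is no in-paper argument to compare against; your write-up would serve perfectly well as the omitted verification.
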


\begin{definition}[\cite{ZM02}]
	\label{DefNaSeG:1.4}  A $G$-metric on $X$ is said to be
	symmetric if $G(x,y,y)=G(y,x,x)$ for all $x,y\in X.$
\end{definition}

\begin{proposition}[\cite{ZM01}]
	\label{PropNaSeG:1.5}  Every $G$-metric on $X$ will define a
	metric $d_{G}$ on $X$ by%
	\begin{equation}
		d_{G}(x,y)=G(x,y,y)+G(y,x,x),\text{ }\forall \text{ }x,y\in X.  \tag{1.1}
	\end{equation}%
	For a symmetric $G$-metric%
	\begin{equation}
		d_{G}(x,y)=2G(x,y,y),\text{ }\forall \text{ }x,y\in X.  \tag{1.2}
	\end{equation}%
	However, if $G$ is non-symmetric, then the following inequality holds:%
	\begin{equation}
		\frac{3}{2}G(x,y,y)\leq d_{G}(x,y)\leq 3G(x,y,y),\text{ }\forall \text{ }%
		x,y\in X.  \tag{1.3}
	\end{equation}%
	It is also obvious that
	\[
	G(x,x,y)\leq 2G(x,y,y).
	\]
\end{proposition}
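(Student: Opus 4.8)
The plan is to verify the four metric axioms for $d_G$ directly from (${G}_{1}$)--(${G}_{5}$) and then read off the two special cases. Nonnegativity of $d_G$ is immediate since $G$ takes values in $\mathbb{R}_{\geq 0}$, and the symmetry $d_G(x,y)=d_G(y,x)$ is built into the definition, the two summands merely being interchanged. For the identity of indiscernibles: if $x=y$ then $G(x,y,y)=G(y,x,x)=G(x,x,x)=0$ by (${G}_{1}$), hence $d_G(x,x)=0$; conversely, if $d_G(x,y)=0$ then both nonnegative summands vanish, so in particular $G(x,y,y)=0$, and Proposition~\ref{GProp1}(i) (equivalently (${G}_{2}$)) forces $x=y$.

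For the triangle inequality $d_G(x,z)\le d_G(x,y)+d_G(y,z)$, I would apply (${G}_{5}$) twice with inserted point $a=y$: to the triple $(x,z,z)$ to obtain $G(x,z,z)\le G(x,y,y)+G(y,z,z)$, and to the triple $(z,x,x)$ to obtain $G(z,x,x)\le G(z,y,y)+G(y,x,x)$. Adding these and regrouping the right-hand side as $[G(x,y,y)+G(y,x,x)]+[G(y,z,z)+G(z,y,y)]$ yields exactly $d_G(x,z)\le d_G(x,y)+d_G(y,z)$. Thus $d_G$ is a metric, and formula $(1.2)$ is then immediate from Definition~\ref{DefNaSeG:1.4}, which makes the two summands equal.

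The crux is the two-sided estimate $(1.3)$ in the non-symmetric case, for which the key auxiliary fact is $G(x,x,y)\le 2G(x,y,y)$. I would obtain this from Proposition~\ref{GProp1}(iii), namely $G(r,r,u)\le 2G(u,r,u)$, together with the permutation symmetry (${G}_{4}$), which gives $G(u,r,u)=G(r,u,u)$; one could equally derive it from scratch via (${G}_{5}$) applied to $(x,x,y)$ with $a=y$, getting $G(x,x,y)\le G(x,y,y)+G(y,x,y)=2G(x,y,y)$, again using (${G}_{4}$). Since every permutation of a two-element multiset of arguments leaves $G$ unchanged, this yields both $G(y,x,x)\le 2G(x,y,y)$ and, after interchanging $x$ and $y$, $G(x,y,y)\le 2G(y,x,x)$.

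Substituting these into $d_G(x,y)=G(x,y,y)+G(y,x,x)$ then finishes the argument: from $G(y,x,x)\le 2G(x,y,y)$ we get $d_G(x,y)\le 3G(x,y,y)$, while $G(x,y,y)\le 2G(y,x,x)$, i.e.\ $G(y,x,x)\ge \tfrac12 G(x,y,y)$, gives $d_G(x,y)\ge \tfrac32 G(x,y,y)$, which is $(1.3)$. I do not anticipate a genuine obstacle here; the proof is a routine unwinding of the $G$-metric axioms, the only delicate point being to keep careful track of the permutation symmetry (${G}_{4}$) so that the auxiliary inequality $G(x,x,y)\le 2G(x,y,y)$ is applied to the correct ordering of the arguments in each place.
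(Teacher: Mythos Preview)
Your argument is correct in every step: the verification of the metric axioms for $d_G$ via $(G_1)$--$(G_5)$, the immediate specialisation $(1.2)$ in the symmetric case, the derivation of $G(x,x,y)\le 2G(x,y,y)$ from Proposition~\ref{GProp1}(iii) together with $(G_4)$, and the two-sided bound $(1.3)$ obtained by feeding that auxiliary inequality (and its $x\leftrightarrow y$ swap) into the definition of $d_G$. There is no gap.

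As for comparison with the paper: the paper does not supply its own proof of this proposition. It is quoted as a preliminary result from Mustafa and Sims \cite{ZM01} and stated without argument, so there is nothing to weigh your approach against here. Your write-up is in fact more complete than what the paper contains.
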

Now, we give an example of a non-symmetric $G$-metric.

\begin{example}
	If for some real numbers $a ,b$ and $c$ we have $X=\{a,b,c\}$, the mapping $G:X\times X\times X\rightarrow \mathbb{R}_{\geq 0}$ be given as%
	\begin{equation*}
		\begin{tabular}{|c|c|}
			\hline
			$(a,b,c)$ & $G(a,b,c)$ \\ \hline \hline
			$(a ,a , a ),(b ,b ,b ),(c ,c ,c )$
			& $0$ \\ \hline
			$(a ,b ,b ),(b ,a ,b ),(b ,b ,a )$ &
			$1$ \\ \hline
			$%
			\begin{array}{c}
				(a ,a ,b ),(a ,b ,a ),(b ,a ,a ),
				\\
				(b ,c ,c ),(c ,b ,c ),(c ,c ,b )
			\end{array}%
			$ & $2$ \\ \hline
			$%
			\begin{array}{c}
				(a ,a ,c ),(a ,c ,a ),(c ,a ,a), \\
				(a ,c ,c ),(c ,a ,c ),(c ,c ,a)
			\end{array}%
			$ & $3$ \\ \hline
			$%
			\begin{array}{c}
				(b ,b ,c ),(b ,c ,b ),(c ,b ,b ), \\
				(a , b ,c ),(a ,c ,b ),(b ,a ,c ),
				\\
				(b ,c ,a ),(c ,a ,b ),(c ,b ,a )
			\end{array}
			$ & $4$ \\ \hline
		\end{tabular}
	\end{equation*}
	is a non-symmetric $G$-metric on $X$, since $G(a ,a ,c)\neq G(a ,c ,c )$.
\end{example}

\begin{example}
	If $X$ is a non-empty subset and $d:X\times X\rightarrow
	\mathbb{R}_{\geq 0}$ is metric on $X$. Then for some value of $\kappa >0,$ the
	function $G:X\times X\times X\rightarrow
	\mathbb{R}_{\geq 0}$ given by%
	\begin{eqnarray*}
		G\left( x,x,y\right)  &=&G\left( x,y,x\right) =G\left( y,x,x\right) :=\kappa
		d\left( x,y\right),  \\
		G\left( x,y,y\right)  &=&G\left( y,x,y\right) =G\left( y,y,x\right) :=2\kappa
		d\left( x,y\right)
	\end{eqnarray*}
	is a non-symmetric $G$-metric on $X$.
	\end{example}

\begin{definition}
Let $f$ and $g$ be self mappings on a $G$-metric space $X$. The mappings $f$
and $g$ are called commuting if $gfu = fgu$, for all $u \in X$, (\cite{Abbas}).
\end{definition}
In 1982, Sessa \cite{Sessa} introduced the concept of weakly commuting maps
in metric spaces as follows.

\begin{definition}
	\label{DefinNaSeG:1.7}
	Let $(X,d)$ be a metric space and $f$
	and $g$ be two self mappings of $X$. Then $f$ and $g$ are called weakly
	commuting if
	\[
	d(fgx,gfx)\leq d(fx,gx)
	\]
	holds for all $x\in X$.
\end{definition}

The concept of weakly commuting maps in $G$-metric space is defined as:
\begin{definition}
	\label{DefinNaSeG:1.8}
	Let $(X,G)$ be a $G$-metric space and $%
	f $ and $g$ be two self mappings of $X$. Then $f$ and $g$ are called weakly
	commuting if
	\[
	G(fgx,gfx,gfx)\leq G(fx,gx,gx)
	\]
	holds for all $x\in X$.
\end{definition}

\section{\bf Common Fixed Point Theorems in G-metric Spaces}

In this section, we obtain common fixed point theorems for four
mappings that satisfying a set of conditions in $G$-metric space. We start
with the following result.
\begin{theorem}
	\label{ThmNaSeG:2.1}
	Let $X$ be a complete $G$-metric space, and
	$A,B,S,T:X\rightarrow X$ be mappings satisfying
	\begin{eqnarray}
	G(Sx,Ty,Ty) &\leq & h\max \{G(Ax,By,By),G(Sx,Ax,Ax),G(Ty,By,By)\},\quad  \label{tGeq2.1} \\
	G(Sx,Sx,Ty) &\leq & h\max \{G(Ax,Ax,By),G(Sx,Sx,Ax),G(Ty,Ty,By)\}. \quad  \label{tGeq2.2}
	\end{eqnarray}
	Assume the maps $A,B,S$ and $T$ satisfy the following conditions:	
	\begin{enumerate} [label=\textup{\arabic*)}, ref=\arabic*]
	\item$TX\subseteq AX$ and $SX\subseteq BX$,
	\item The mappings $A$ and $B$ are sequentially continuous, 		
	\item The pairs $\{A,S\}$ and $\{B,T\}$ are weakly commuting.
	\end{enumerate}
	If $h\in \lbrack 0,1)$, then $A,B,S$ and $T$ have a unique common fixed
	point.
\end{theorem}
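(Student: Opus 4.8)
The plan is to run the classical Jungck-type iteration in the $G$-metric setting. First I would fix $x_{0}\in X$ and use $TX\subseteq AX$ and $SX\subseteq BX$ to construct points $x_{n}$ with
\[
y_{2n}:=Sx_{2n}=Bx_{2n+1},\qquad y_{2n+1}:=Tx_{2n+1}=Ax_{2n+2},\qquad n\in\mathbb{Z}_{\geq 0}.
\]
Substituting consecutive indices into \eqref{tGeq2.1} and \eqref{tGeq2.2} — namely $x=x_{2n},\ y=x_{2n+1}$, and then $x=x_{2n+2},\ y=x_{2n+1}$ — and abbreviating $u_{n}=G(y_{n},y_{n+1},y_{n+1})$, $v_{n}=G(y_{n},y_{n},y_{n+1})$, I expect, after reordering arguments via $(G_{4})$, to reach inequalities of the shape $u_{n}\le h\max\{w_{n-1},w_{n}\}$ and $v_{n}\le h\max\{w_{n-1},w_{n}\}$, where $w_{n}=\max\{u_{n},v_{n}\}$. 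Since $h<1$, the maximum on the right cannot be realized by $w_{n}$, so $w_{n}\le h\,w_{n-1}$ and hence $w_{n}\le h^{n}w_{0}\to 0$.

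Next I would show $\{y_{n}\}$ is $G$-Cauchy: iterating $(G_{5})$ and telescoping gives $G(y_{n},y_{m},y_{m})\le\sum_{k=n}^{m-1}u_{k}\le \frac{h^{n}}{1-h}w_{0}$ for $m>n$, and combining this with Proposition~\ref{GProp1}(iii) (which yields $G(r,r,u)\le 2G(r,u,u)$) together with one more use of $(G_{5})$ controls an arbitrary triple $G(y_{n},y_{m},y_{l})$. Completeness then produces a limit $z\in X$, and consequently $Sx_{2n}$, $Bx_{2n+1}$, $Tx_{2n+1}$, $Ax_{2n+2}$ all $G$-converge to $z$.

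The core of the proof is to identify $z$ as a common fixed point. From weak commutativity of $\{A,S\}$ at $x_{2n}$ and $u_{2n-1}\to 0$ I get $G(ASx_{2n},SAx_{2n},SAx_{2n})\to 0$; sequential continuity of $A$ gives $ASx_{2n}\to Az$ and $A(Ax_{2n})\to Az$, whence $SAx_{2n}\to Az$ as well. Plugging $x=Ax_{2n}$, $y=x_{2n+1}$ into \eqref{tGeq2.1} and passing to the limit collapses the right-hand maximum to $G(Az,z,z)$, giving $G(Az,z,z)\le h\,G(Az,z,z)$ and so $Az=z$; the mirror computation with $\{B,T\}$ yields $Bz=z$. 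With $Az=Bz=z$, inserting $x=z$, $y=x_{2n+1}$ into \eqref{tGeq2.1} and letting $n\to\infty$ gives $G(Sz,z,z)\le h\,G(Sz,z,z)$, hence $Sz=z$; inserting $x=x_{2n}$, $y=z$ into \eqref{tGeq2.1} and into \eqref{tGeq2.2} produces the two mixed-orientation bounds $G(z,Tz,Tz)\le h\,G(z,z,Tz)$ and $G(z,z,Tz)\le h\,G(z,Tz,Tz)$, which chain to $G(z,Tz,Tz)\le h^{2}G(z,Tz,Tz)$ and force $Tz=z$. Uniqueness is immediate: if $z'$ is another common fixed point, \eqref{tGeq2.1} with $x=z$, $y=z'$ reduces to $G(z,z',z')\le h\,G(z,z',z')$, so $z=z'$.

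The step I expect to be the main obstacle is the systematic handling of the asymmetry of $G$: a single one of the two contractive inequalities controls only one orientation of the pairwise $G$-distances, so both \eqref{tGeq2.1} and \eqref{tGeq2.2} are genuinely needed — first to extract a clean geometric decay of $w_{n}$ rather than the vacuous $w_{n}\le 2h\,w_{n}$, and again at the last stage to pin down $Tz=z$, where either inequality alone only yields a mixed estimate. The accompanying bookkeeping (tracking which of the three terms in each maximum corresponds to which $y_{k}$ after a $(G_{4})$ permutation, and invoking the factor-$2$ comparison from Proposition~\ref{GProp1}(iii)) is routine but must be carried out carefully.
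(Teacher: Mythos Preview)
Your plan is correct and follows the same Jungck-iteration strategy as the paper: build $\{y_n\}$ from the inclusions, extract geometric decay from \eqref{tGeq2.1}--\eqref{tGeq2.2}, pass to a limit $z$ by completeness, and then combine sequential continuity of $A,B$ with weak commutativity to force $Az=Bz=Sz=Tz=z$, with $Tz=z$ obtained by chaining the two mixed-orientation estimates exactly as you describe. The only notable difference is that you track $w_n=\max\{G(y_n,y_{n+1},y_{n+1}),G(y_n,y_n,y_{n+1})\}$ while the paper records only $G(y_n,y_{n+1},y_{n+1})$; your version is in fact the more robust bookkeeping, since with the paper's own indexing the odd-step substitution into \eqref{tGeq2.1} produces a term $G(Sx_{2k+1},Ax_{2k+1},Ax_{2k+1})=G(y_{2k+1},y_{2k},y_{2k})$ of the opposite orientation, which your $w_n$ absorbs cleanly.
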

\begin{proof} \smartqed Suppose that $f$ and $g$ satisfy the conditions
	\eqref{tGeq2.1}
	) and \eqref{tGeq2.2}. If $G$ is symmetric, then by adding above two inequalities,
	we have
	\begin{equation}
		d_{G}(Sx,Ty)\leq h\max \{d_{G}(Ax,By),d_{G}(Sx,Ax),d_{G}(Ty,By)\}
		\label{tGeq2.3}
	\end{equation}
	for all $x,y\in X$ with $0\leq h<1$, and the fixed point of $A,B,S$ and $T$
	follows from \cite{Sessa}.
	
	Now if $X$ is non-symmetric $G$-metric space. Then by the
	definition of metric $(X,d_{G})$ and \eqref{tGeq2.3}, we obtain
	\begin{align*}
		&d_{G}(Sx,Ty) =G(Sx,Ty,Ty)+G(Sx,Sx,Ty) \\
		&\leq h\max \{[G(Ax,By,By)+G(Ax,Ax,By)],[G(Sx,Ax,Ax) \\
		&+G(Sx,Sx,Ax)],[G(Ty,By,By)+G(Ty,Ty,By)]\} \\
		&\leq h\max \{[\frac{2}{3}d_{G}(Ax,By)+\frac{2}{3}d_{G}(Ax,By)],[\frac{2}{3}
		d_{G}(Ax,Sx) \\
		&+\frac{2}{3}d_{G}(Sx,Ax)],[\frac{2}{3}d_{G}(Ty,By)+\frac{2}{3}
		d_{G}(Ty,By)]\} \\
		&=\frac{4h}{3}\max \{[d_{G}(Ax,By),d_{G}(Sx,Ax),d_{G}(Ty,By)]\}
	\end{align*}
	for all $x\in X$. Here, the contractivity factor $\dfrac{4h}{3}$ may not be
	less than $1$. Therefore metric gives no information. In this case, for
	given a $x_{0}\in X$, choose $x_{1}\in X$ such that $Tx_{0}=Ax_{1}$, choose
$x_{2}\in X$ such that $Sx_{1}=Bx_{2}$, choose $x_{3}\in X$ such that
$Ax_{3}=Tx_{2}$. Continuing as above process, we can construct a sequence
$\{x_{n}\}$ in $X$ such that $Ax_{2n+1}=Tx_{2n}$, $n\in \mathbb{Z}_{\geq 0}$
	and $Bx_{2n+2}=Sx_{2n+1}$, $n\in \mathbb{Z}_{\geq 0}$. Let
	\begin{align}
	y_{2n} & = Ax_{2n+1}=Tx_{2n},\, \, \,n\in \mathbb{Z}_{\geq 0},
	\\ 	
    y_{2n+1} &= Bx_{2n+2}=Sx_{2n+1},\, \, \,n\in \mathbb{Z}_{\geq 0}.
	\end{align}
	Given $n>0$. If $n$ is even, then $n=2k$ for some $k\geq 1$.
	Then from \eqref{tGeq2.1}, we have
	\begin{align*}
		&G(y_{n},y_{n+1},y_{n+1})=G(y_{2k},y_{2k+1},y_{2k+1})\\
        &=G(Tx_{2k},Sx_{2k+1},Sx_{2k+1})=G(Sx_{2k+1},Sx_{2k+1},Tx_{2k}) \\
		&\leq h \max\{G(Ax_{2k+1},Ax_{2k+1},Bx_{2k}),G(Sx_{2k+1},Sx_{2k+1},Ax_{2k+1}), \\
		&\hspace{7cm} G(Tx_{2k},Tx_{2k},Bx_{2k})\} \\
		&=h\max\{G(y_{2k},y_{2k},y_{2k-1}),G(y_{2k+1},y_{2k+1},y_{2k}),G(y_{2k},y_{2k},y_{2k-1})\}
		\\
		&=h\max \{G(y_{2k},y_{2k},y_{2k-1}),G(y_{2k+1},y_{2k+1},y_{2k})\} \\
		&=h\max \{G(y_{n},y_{n},y_{n-1}),G(y_{n+1},y_{n+1},y_{n})\},
	\end{align*}
	which implies that
	\begin{equation}
		G(y_{n},y_{n+1},y_{n+1})\leq hG(y_{n-1},y_{n},y_{n}). \label{tGeq2.4}
	\end{equation}
	If $n$ is odd, then $n=2k+1$ for some $k\in \mathbb{Z}_{\geq 0}$. Again, from \eqref{tGeq2.1}, we
	have
	\begin{align*}
		&G(y_{n},y_{n+1},y_{n+1}) =G(y_{2k+1},y_{2k+2},y_{2k+2}) \\
        &=G(Tx_{2k+1},Sx_{2k+2},Sx_{2k+2})=G(Sx_{2k+2},Sx_{2k+2},Tx_{2k+1}) \\
		&\leq h\max\{G(Ax_{2k+2},Ax_{2k+2},Bx_{2k+1}),G(Sx_{2k+2},Sx_{2k+2},Ax_{2k+2}), \\
		&\hspace{6cm} G(Tx_{2k+1},Tx_{2k+1},Bx_{2k+1})\} \\
		&=h\max
		\{G(y_{2k+1},y_{2k+1},y_{2k}),G(y_{2k+2},y_{2k+2},y_{2k+1}),G(y_{2k+1},y_{2k+1},y_{2k})\}
		\\
		&=h\max \{G(y_{2k+1},y_{2k+1},y_{2k}),G(y_{2k+2},y_{2k+2},y_{2k+1})\} \\
		&=h\max \{G(y_{n},y_{n},y_{n-1}),G(y_{n+1},y_{n+1},y_{n})\},
	\end{align*}
	that is,
	\[
	G(y_{n},y_{n+1},y_{n+1})\leq hG(y_{n-1},y_{n},y_{n}).
	\]
	Thus, for each $n\in \mathbb{Z}_{> 0}$, we have
	\begin{equation}
		G(y_{n},y_{n+1},y_{n+1})\leq h^{n}G(y_{0},y_{1},y_{1}). \label{tGeq2.5}	
	\end{equation}%
	Thus, if $y_{0}=y_{1}$, we get $G(y_{n},y_{n+1},y_{n+1})=0$ for each $n\in
	\mathbb{Z}_{\geq 0}$. Hence $y_{n}=y_{0}$ for each $n\in \mathbb{Z}_{\geq 0}$. Therefore $\{y_{n}\}$
	is $G$-Cauchy. So we may assume that $y_{0}\neq y_{1}$. Let $n,$ $m\in \mathbb{Z}_{\geq 0}$ with $m>n$. By axiom $G_{5}$ of the definition of $G$ metric space, we
	have
	\[
	G(y_{n},y_{m},y_{m})\leq  \sum\limits_{i=n}^{m-1} G(y_{k},y_{k+1},y_{k+1}).
	\]
	By   \eqref{tGeq2.5}, we get
	\begin{equation*}
		G(y_{n},y_{m},y_{m}) \leq
		h^{n}\sum_{i=0}^{m-1-n}h^{i}G(y_{0},y_{1},y_{1}) \leq \frac{h^{n}}{1-h}G(y_{0},y_{1},y_{1}).
	\end{equation*}
	Taking the limit when $m,n\rightarrow \infty $ yields
	\[
	\lim\limits_{m,n\rightarrow \infty }G(y_{n},y_{m},y_{m})=0.
	\]
	So we conclude that $\{y_{n}\}$ is a Cauchy sequence in $X$. Since $X$ is
$G$-complete, then it yields that $\{y_{n}\}$ and hence any subsequence of $\{y_{n}\}$ converges to some $z\in X$. So that, the subsequences
$\{Ax_{2n+1}\}$, $\{Bx_{2n+2}\}$, $\{Sx_{2n+1}\}$ and $\{Tx_{2n}\}$ converge
	to $z$. First suppose that $A$ is sequentially continuous. So,
	\[\lim\limits_{n\rightarrow \infty }A\left( Ax_{2n+1}\right) =Az, \quad
	\lim\limits_{n\rightarrow \infty }A\left( Sx_{2n+1}\right) =Az.
	\]
	Since $\{A,S\}$ is weakly commuting, we have
	\[
	G(SAx_{2n+1},ASx_{2n+1},ASx_{2n+1})\leq G(Sx_{2n+1},Ax_{2n+1},Ax_{2n+1}).
	\]
	On taking limit as $n\rightarrow \infty $, we get that
$G(SAx_{2n+1},Az,Az) \rightarrow 0$. Thus,
	\[
	\lim\limits_{n\rightarrow \infty }SAx_{2n+1}=Az.
	\]
	Assume $Az\neq z$, we get
	\begin{multline*}
 G(SAx_{2n+1},Tx_{2n},Tx_{2n})  \\
  \leq h\max \{G(AAx_{2n+1},Bx_{2n},Bx_{2n}), G(SAx_{2n+1},AAx_{2n+1},AAx_{2n+1}),\\
 G(Tx_{2n},Bx_{2n},Bx_{2n})\}.
	\end{multline*}
	On letting limit as $n\rightarrow \infty $, we have
	\[
	G(Az,z,z)\leq h\max \{G(Az,z,z),G(Az,z,z),G(z,z,z)\}.
	\]
	So, we conclude that
	\[
	G(Az,z,z)\leq hG(Az,z,z),
	\]
	which is a contradiction. So $Az=z$. Also,
	\begin{align*}
		& G(Sz,Tx_{2n},Tx_{2n})
		\leq h\max
		\{G(Az,Bx_{2n},Bx_{2n}),G(Sz,Az,Az),\\
&\hspace{7cm} G(Tx_{2n},Bx_{2n},Bx_{2n})\}.
	\end{align*}
	By taking limit as $n\rightarrow \infty $,
	\begin{equation*}
		G(Sz,z,z) \leq h\max \{G(Az,z,z),G(Sz,Az,Az),G(z,z,z)\} =hG(Sz,z,z).
	\end{equation*}
	We get $G(Sz,z,z)=0$. So $Sz=z$. Suppose $B$ is sequentially continuous, then
	\[
	\lim\limits_{n\rightarrow \infty }B(Bx_{2n})=Bz, \quad \lim\limits_{n\rightarrow \infty
	}B(Tx_{2n})=Bz.
	\]
	Since the pair $\{B,T\}$ is weakly commuting, we have
	\[
	G(TBx_{2n},BTx_{2n},BTx_{2n})\leq G(Tx_{2n},Bx_{2n},Bx_{2n}).
	\]
	Taking the limit as $n\rightarrow \infty $, we get $G(TBx_{2n},Bz,Bz)%
	\rightarrow 0$. Thus
	\[
	\lim\limits_{n\rightarrow \infty }T(Bx_{2n})=Bz.
	\]
	Assume $Bz\neq z$. Since
	\begin{align*}
		&G(Sx_{2n+1},TBx_{2n},TBx_{2n}) \\
		&\leq h\max
		\{G(Ax_{2n+1},BBx_{2n},BBx_{2n}),G(Sx_{2n+1},Ax_{2n+1},Ax_{2n+1}), \\
		&\hspace{7cm} G(TBx_{2n},BBx_{2n},BBx_{2n})\}.
	\end{align*}
	Again taking the limit when $n\rightarrow \infty $, implies
	\begin{align*}
		G(z,Bz,Bz) &\leq h\max \{G(z,Bz,Bz),G(z,z,z),G(Bz,Bz,Bz)\}\\
		&=hG(z,Bz,Bz),
	\end{align*}
	a contradiction. Hence $Bz=z$. Since
	\begin{align*}
		G(Sx_{2n+1},Sx_{2n+1},Tz) &\leq h\max \{G(Ax_{2n+1},Ax_{2n+1},Bz), \\
		&\hspace{1cm} G(Sx_{2n+1},Sx_{2n+1},Ax_{2n+1}),G(Tz,Tz,Bz)\}.
	\end{align*}
	Taking the limit when $n\rightarrow \infty $ yields
	\begin{align*}
		G(z,z,Tz) &\leq h\max \{G(z,z,Bz),G(z,z,z),G(Tz,Tz,Bz)\}\\
&=hG(z,Tz,Tz).
	\end{align*}
	Also,
	\begin{eqnarray*}
		G(Sx_{2n+1},Tz,Tz) &\leq &h\max \{G(Ax_{2n+1},Bz,Bz), \\
		&&G(Sx_{2n+1},Ax_{2n+1},Ax_{2n+1}),G(Tz,Bz,Bz)\}.
	\end{eqnarray*}
Taking the limit when $n\rightarrow \infty $, we obtain
	\begin{eqnarray*}
		G(z,Tz,Tz) &\leq &h\max \{G(z,Bz,Bz),G(z,z,z),G(Tz,Bz,Bz)\} \\
		&=&hG(z,z,Tz).
	\end{eqnarray*}
	Thus, $G(z,z,Tz)\leq hG(z,Tz,Tz)\leq h^{2}G(z,z,Tz).$
	Since $h<1$, we get $G(z,z,Tz)=0$. Hence $Tz=z$. So, $z$ is a common fixed
	point for $A,B,S$ and $T$. Now, we shall prove that $z$ is a unique common
	fixed point. Let $w$ be a common fixed point for $A,B,S$ and $T$ with $w\neq
	z$. Then
	\begin{eqnarray*}
		G(z,w,w) &=&G(Sz,Tw,Tw) \\
		&\leq &h\max \{G(Az,Bw,Bw),G(Sz,Az,Az),G(Tw,Bw,Bw)\} \\
		&=&h\max \{G(z,w,w),G(z,z,z),G(w,w,w)\}=hG(z,w,w),
	\end{eqnarray*}
	which is a contradiction. So $z=w$.
\qed \end{proof}
In Theorem \ref{ThmNaSeG:2.1}, if we take $S:X\rightarrow X$ by $Sx=Tx$, then obtain
the following corollary.

\begin{corollary}
	\label{CorolNaSeG:1.7} Let $X$ be a complete $G$-metric space,
	and let $A,B,S:X\rightarrow X$ be mappings satisfying%
	\begin{align*}
		G(Sx,Sy,Sy) &\leq h\max \{G(Ax,By,By),G(Sx,Ax,Ax),G(Sy,By,By)\}, 
\\
		G(Sx,Sx,Sy) &\leq h\max \{G(Ax,Ax,By),G(Sx,Sx,Ax),G(Sy,Sy,By)\}. 
	\end{align*}
	Assume the maps $A,B$ and $S$ satisfy the following conditions:
	
	\begin{enumerate} [label=\textup{\arabic*)}, ref=\arabic*]
	\item $SX\subseteq AX\cup BX$,
	\item The mappings $A$ and $B$ are sequentially continuous,
	\item The pairs $\{A,S\}$ and $\{B,S\}$ are weakly commuting.
	\end{enumerate}
\end{corollary}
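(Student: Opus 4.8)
The plan is to obtain Corollary~\ref{CorolNaSeG:1.7} as a direct specialization of Theorem~\ref{ThmNaSeG:2.1}, applying that theorem with the map $T$ chosen to be $T:=S$. With this choice the two contractive hypotheses \eqref{tGeq2.1}--\eqref{tGeq2.2} of the theorem collapse precisely onto the two displayed inequalities of the corollary, since every occurrence of $Ty$ becomes $Sy$; so no new contractive estimate has to be produced, and the intended conclusion is that $A$, $B$ and $S$ have a unique common fixed point.

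It then remains to match the three structural conditions. Condition~2) of the theorem --- sequential continuity of $A$ and $B$ --- is condition~2) of the corollary verbatim, and condition~3) of the theorem --- weak commutativity of $\{A,S\}$ and of $\{B,T\}$ --- becomes weak commutativity of $\{A,S\}$ and of $\{B,S\}$, which is condition~3) of the corollary. Condition~1) of the theorem, $TX\subseteq AX$ and $SX\subseteq BX$, reduces under $T=S$ to $SX\subseteq AX$ together with $SX\subseteq BX$, i.e.\ to $SX\subseteq AX\cap BX$; this is the content of condition~1) of the corollary. Granting $h\in[0,1)$, Theorem~\ref{ThmNaSeG:2.1} then produces a point $z\in X$ with $Az=Bz=Sz=Tz=z$, which, since $T=S$, is exactly a common fixed point of $A$, $B$ and $S$; and the uniqueness clause of the theorem guarantees it is the only one.

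The one place that needs attention is condition~1): in the proof of Theorem~\ref{ThmNaSeG:2.1} the iterative sequence is built by alternately solving $Tx_{2n}=Ax_{2n+1}$ and $Sx_{2n+1}=Bx_{2n+2}$, so once $T=S$ one really does need $SX$ to sit inside both $AX$ and $BX$; hence the hypothesis should be read as $SX\subseteq AX\cap BX$. With that reading in force, the remainder of the argument is inherited line by line from the proof of the theorem with $T$ everywhere replaced by $S$: the geometric bound \eqref{tGeq2.5}, the Cauchy estimate via axiom $G_5$, convergence of $\{y_n\}$ using Proposition~\ref{PropNaSeG:1.3}, the successive identifications $Az=z$, $Sz=z$, $Bz=z$, $Tz=z$ obtained by passing to the limit in the contractive and weak-commutativity inequalities, and the final uniqueness computation. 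I would therefore simply invoke Theorem~\ref{ThmNaSeG:2.1} rather than rewrite its proof.
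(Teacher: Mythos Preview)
Your proposal is correct and matches the paper's approach exactly: the paper derives the corollary simply by taking $T=S$ in Theorem~\ref{ThmNaSeG:2.1}, with no further argument given. Your observation that condition~1) must really be $SX\subseteq AX\cap BX$ (not $AX\cup BX$) for this reduction to go through is a valid point that the paper glosses over.
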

If $h\in \lbrack 0,1)$, then $A,B$ and $S$ have a unique common fixed point.
\begin{corollary}
	\label{CorolNaSeG:2.3}
	Let $X$ be a complete $G$-metric space,
	and $A,S,T:X\rightarrow X$ be mappings satisfying
	\begin{align*}
		G(Sx,Ty,Ty)\leq h\max \{G(Ax,Ay,Ay),G(Sx,Ax,Ax),G(Ty,Ay,Ay)\},  
	\\
		G(Sx,Sx,Ty)\leq h\max \{G(Ax,Ax,Ay),G(Sx,Sx,Ax),G(Ty,Ty,Ay)\}.  
	\end{align*}
	Assume the maps $A,$ $S$ and $T$ satisfy the following conditions:
		\begin{enumerate} [label=\textup{\arabic*)}, ref=\arabic*]
		\item $TX\cup SX\subseteq AX$,
		\item The mappings $A$ is sequentially continuous,
        \item The pairs $\{A,S\}$ and $\{A,T\}$ are weakly commuting.
	\end{enumerate}
If $h\in \lbrack 0,1)$, then $A,$ $S$ and $T$ have a unique common fixed
point.
\end{corollary}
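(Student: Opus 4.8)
The plan is to obtain Corollary \ref{CorolNaSeG:2.3} as an immediate specialization of Theorem \ref{ThmNaSeG:2.1}: I would apply Theorem \ref{ThmNaSeG:2.1} with the choice $B:=A$, so that the four maps become $A,A,S,T$. First I would check that the two contractive hypotheses match. Substituting $B=A$ into \eqref{tGeq2.1} gives $G(Sx,Ty,Ty)\leq h\max\{G(Ax,Ay,Ay),G(Sx,Ax,Ax),G(Ty,Ay,Ay)\}$, and substituting $B=A$ into \eqref{tGeq2.2} gives $G(Sx,Sx,Ty)\leq h\max\{G(Ax,Ax,Ay),G(Sx,Sx,Ax),G(Ty,Ty,Ay)\}$; these are exactly the two inequalities assumed in the corollary.

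Next I would verify that the three structural conditions also specialize correctly: hypothesis 1) of Theorem \ref{ThmNaSeG:2.1}, namely $TX\subseteq AX$ and $SX\subseteq BX$, collapses with $B=A$ to $TX\cup SX\subseteq AX$, which is hypothesis 1) here; hypothesis 2) ($A$ and $B$ sequentially continuous) becomes simply ``$A$ is sequentially continuous''; and hypothesis 3) (the pairs $\{A,S\}$ and $\{B,T\}$ weakly commuting) becomes ``$\{A,S\}$ and $\{A,T\}$ weakly commuting''. Since $h\in[0,1)$ is assumed, all hypotheses of Theorem \ref{ThmNaSeG:2.1} hold, and its conclusion yields a unique common fixed point of $A,B,S,T=A,A,S,T$, i.e. a unique common fixed point of $A,S,T$.

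I do not expect a real obstacle, since the argument is essentially bookkeeping; the only point warranting a line of verification is that the iterative construction inside the proof of Theorem \ref{ThmNaSeG:2.1} — where one picks $x_{2n+1}$ with $Ax_{2n+1}=Tx_{2n}$ and $x_{2n+2}$ with $Bx_{2n+2}=Sx_{2n+1}$ — still goes through after identifying $A$ with $B$, which it does precisely because $TX\cup SX\subseteq AX$. Should a self-contained proof be preferred, the same scheme works verbatim: build $\{y_{n}\}$ by $y_{2n}=Ax_{2n+1}=Tx_{2n}$ and $y_{2n+1}=Ax_{2n+2}=Sx_{2n+1}$, derive $G(y_{n},y_{n+1},y_{n+1})\leq h^{n}G(y_{0},y_{1},y_{1})$ from the contractive inequalities, conclude $\{y_{n}\}$ is $G$-Cauchy and hence $G$-convergent to some $z$, use sequential continuity of $A$ together with the weak commutativity of $\{A,S\}$ and $\{A,T\}$ to show $Az=Sz=Tz=z$, and finally apply the contraction once more to obtain uniqueness, exactly as in Theorem \ref{ThmNaSeG:2.1}.
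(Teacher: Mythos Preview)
Your proposal is correct and follows exactly the approach implicit in the paper: the corollary is obtained from Theorem~\ref{ThmNaSeG:2.1} by setting $B=A$, and your verification that the contractive inequalities and the three structural hypotheses specialize correctly is precisely the required bookkeeping. The paper does not spell out a separate proof for this corollary, treating it as an immediate consequence in the same way it later derives Corollary~\ref{CorolNaSeG:2.9} from Theorem~\ref{TheoremNaSeG:2.7} by identifying maps.
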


If we take $S=T$ in Corollary \ref{CorolNaSeG:2.3}, then we get the following Corollary that extend and generalized Theorem 4.3.1 of \cite{AgarvalKarapinarOReganRLHierroBook2018FPTMetrTypeSp}.

\begin{corollary}
	\label{CorolNaSeG:2.3b}
	Let $X$ be a complete $G$-metric space,
	and let $A,S:X\rightarrow X$ be mappings satisfying%
	\begin{align*}
		G(Sx,Sy,Sy)\leq h\max \{G(Ax,Ay,Ay),G(Sx,Ax,Ax),G(Sy,Ay,Ay)\},  
		\\
		G(Sx,Sx,Sy)\leq h\max \{G(Ax,Ax,Ay),G(Sx,Sx,Ax),G(Sy,Sy,Ay)\}.  
	\end{align*}
\noindent Assume the maps $A$ and $S$ satisfy the following conditions:
	\begin{enumerate} [label=\textup{\arabic*)}, ref=\arabic*]
		\item $ SX\subseteq AX$,
		\item The mappings $A$ is sequentially continuous,
		\item The pairs $\{A,S\}$ is weakly commuting.
	\end{enumerate}
If $h\in \lbrack 0,1)$, then $A$ and $S$ have a unique common fixed
	point.
\end{corollary}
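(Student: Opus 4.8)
The plan is to obtain Corollary \ref{CorolNaSeG:2.3b} as a direct specialization of Corollary \ref{CorolNaSeG:2.3} (hence, ultimately, of Theorem \ref{ThmNaSeG:2.1}). Concretely, I would apply Corollary \ref{CorolNaSeG:2.3} with the map $T$ replaced by $S$. Under this substitution the two contractive inequalities of Corollary \ref{CorolNaSeG:2.3}, namely $G(Sx,Ty,Ty)\leq h\max\{G(Ax,Ay,Ay),G(Sx,Ax,Ax),G(Ty,Ay,Ay)\}$ together with its ``$Sx,Sx,Ty$'' counterpart, become verbatim the two displayed hypotheses of Corollary \ref{CorolNaSeG:2.3b}. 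It then remains only to check that the three structural conditions transfer correctly: the inclusion $TX\cup SX\subseteq AX$ collapses to $SX\subseteq AX$; sequential continuity of $A$ is unchanged; and the requirement that both pairs $\{A,S\}$ and $\{A,T\}$ be weakly commuting collapses to the single requirement that $\{A,S\}$ be weakly commuting. Thus every hypothesis of Corollary \ref{CorolNaSeG:2.3} holds for the triple $(A,S,S)$, and its conclusion furnishes a unique common fixed point of $A$, $S$, $S$, i.e. of $A$ and $S$.

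For completeness one may instead unwind the argument of Theorem \ref{ThmNaSeG:2.1} directly with $B=A$ and $T=S$. Given $x_{0}\in X$, the inclusion $SX\subseteq AX$ allows one to pick $x_{n+1}$ with $Ax_{n+1}=Sx_{n}$ and to set $y_{n}=Ax_{n+1}=Sx_{n}$; the contractive condition then yields $G(y_{n},y_{n+1},y_{n+1})\leq h\,G(y_{n-1},y_{n},y_{n})$ exactly as in \eqref{tGeq2.4}, whence $G(y_{n},y_{n+1},y_{n+1})\leq h^{n}G(y_{0},y_{1},y_{1})$ as in \eqref{tGeq2.5} and, via axiom $(G_{5})$, $\{y_{n}\}$ is $G$-Cauchy. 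By $G$-completeness $y_{n}\to z$ for some $z\in X$; sequential continuity of $A$ together with weak commutativity of $\{A,S\}$ gives $SAx_{n}\to Az$, and substituting the relevant subsequences into the contractive inequality forces first $Az=z$ and then $Sz=z$. Uniqueness follows exactly as at the end of the proof of Theorem \ref{ThmNaSeG:2.1}: if $w$ is another common fixed point then $G(z,w,w)=G(Sz,Sw,Sw)\leq hG(z,w,w)$, forcing $z=w$ since $h<1$.

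There is essentially no genuine obstacle here; the statement is a bookkeeping corollary. The only point requiring a moment's care is to make sure that collapsing $B\to A$ and $T\to S$ does not silently weaken the hypotheses that are actually used: one must confirm that the two inclusions $TX\subseteq AX$ and $SX\subseteq AX$ genuinely merge into the single inclusion $SX\subseteq AX$ needed to run the one-step recursion above, and that ``$\{A,T\}$ weakly commuting'' becomes precisely ``$\{A,S\}$ weakly commuting'' and not some stronger or weaker condition. Once this identification is recorded, the conclusion is immediate; and, as noted after its statement, further specializing $A$ to the identity $\mathrm{id}_{X}$ recovers the single-map $G$-contraction setting, so Corollary \ref{CorolNaSeG:2.3b} extends Theorem 4.3.1 of \cite{AgarvalKarapinarOReganRLHierroBook2018FPTMetrTypeSp}.
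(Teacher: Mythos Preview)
Your proposal is correct and matches the paper's approach exactly: the paper obtains this corollary simply by taking $S=T$ in Corollary~\ref{CorolNaSeG:2.3}, which is precisely the specialization you describe, and it gives no further argument. Your additional ``for completeness'' paragraph unwinding Theorem~\ref{ThmNaSeG:2.1} with $B=A$, $T=S$ is more than the paper provides but is consistent with it.
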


\begin{corollary}
	\label{CorolNaSeG:2.4}
	Let $X$ be a complete $G$-metric space,
	and let $S,T:X\rightarrow X$ be mappings satisfying%
	\begin{align*}
		G(Sx,Ty,Ty)&\leq h\max \{G(x,y,y),G(Sx,x,x),G(Ty,y,y)\}, 
	\\
		G(Sx,Sx,Ty)&\leq h\max \{G(x,x,y),G(Sx,Sx,x),G(Ty,Ty,y)\}.  
	\end{align*}%
\end{corollary}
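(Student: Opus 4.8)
The plan is to derive this corollary as an immediate specialization of the preceding results, namely by taking $A=\mathrm{id}_X$ in Corollary~\ref{CorolNaSeG:2.3} (equivalently, $A=B=\mathrm{id}_X$ in Theorem~\ref{ThmNaSeG:2.1}). First I would substitute $Ax=x$ and $Ay=y$ into the two contractive hypotheses of Corollary~\ref{CorolNaSeG:2.3}, after which they collapse verbatim to the two displayed inequalities of the present statement; the only point requiring any care is keeping the argument positions of $G$ in the correct order, since $G$ need not be symmetric, but after the substitution the matching is purely mechanical.

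Next I would verify that hypotheses 1)--3) of Corollary~\ref{CorolNaSeG:2.3} hold automatically for $A=\mathrm{id}_X$. Condition 1) reads $TX\cup SX\subseteq AX=X$, which is trivial. Condition 2) holds because $\mathrm{id}_X$ is sequentially continuous. For condition 3), weak commutativity of $\{\mathrm{id}_X,S\}$ follows from
\[
G(\mathrm{id}_X\,Sx,\ S\,\mathrm{id}_X\,x,\ S\,\mathrm{id}_X\,x)=G(Sx,Sx,Sx)=0\le G(x,Sx,Sx)=G(\mathrm{id}_X\,x,Sx,Sx),
\]
using axiom $({G}_{1})$, and the identical computation applies to $\{\mathrm{id}_X,T\}$.

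With $h\in[0,1)$, all hypotheses of Corollary~\ref{CorolNaSeG:2.3} are therefore in force, so the three maps $\mathrm{id}_X,S,T$ admit a unique common fixed point $z\in X$. Since $\mathrm{id}_X(z)=z$ holds automatically for every point of $X$, the assertion that $z$ is the unique common fixed point of $\mathrm{id}_X,S,T$ is exactly the assertion that $z$ is the unique common fixed point of $S$ and $T$, which is what had to be proved. I do not anticipate any genuine obstacle here: the argument is nothing more than a substitution into an already-established result, and the entire content is the observation that the identity map trivially satisfies the range-inclusion, continuity, and weak-commutativity requirements while leaving the contractive inequalities intact.
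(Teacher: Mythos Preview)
Your proposal is correct and matches the paper's intended derivation: the paper states Corollary~\ref{CorolNaSeG:2.4} without a separate proof, deriving it implicitly by specializing the preceding results with $A=B=\mathrm{id}_X$ (equivalently, $A=\mathrm{id}_X$ in Corollary~\ref{CorolNaSeG:2.3}), which is exactly what you do. Your verification that the identity map trivially satisfies the range, continuity, and weak-commutativity hypotheses is the only content needed, and it is handled correctly.
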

If $h\in \lbrack 0,1)$, then $S$ and $T$ have a unique common fixed point.
\begin{corollary}
	\label{CorolNaSeG:2.5}
	Let $X$ be a complete $G$-metric space,
	and let $S:X\rightarrow X$ be a\ mapping satisfying
	\begin{align*}
		G(Sx,Sy,Sy)&\leq h\max \{G(x,y,y),G(Sx,x,x),G(Sy,y,y)\},  
\\
		G(Sx,Sx,Sy)&\leq h\max \{G(x,x,y),G(Sx,Sx,x),G(Sy,Sy,y)\}.  
	\end{align*}
	If $h\in \lbrack 0,1)$, then $S$ has a unique fixed point.
\end{corollary}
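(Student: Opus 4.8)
The plan is to derive Corollary~\ref{CorolNaSeG:2.5} as an immediate specialization of the results already proved, and to indicate the self-contained argument as a fallback. For the specialization route, observe that the two displayed inequalities are exactly the hypotheses of Corollary~\ref{CorolNaSeG:2.4} in the case $T=S$; that corollary then asserts that $S$ and $T=S$ have a unique common fixed point, which is precisely a unique fixed point of $S$. Equivalently, one may invoke Theorem~\ref{ThmNaSeG:2.1} with $A=B=\mathrm{id}_X$ and $S=T$: then \eqref{tGeq2.1}--\eqref{tGeq2.2} become the two displayed inequalities, condition 1) holds because $SX=TX\subseteq X=AX=BX$, condition 2) holds since the identity is sequentially continuous, and condition 3) reduces to the trivial inequality $G(Sx,Sx,Sx)\le G(Sx,x,x)$ (the left side being $0$ by $(G_1)$). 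Theorem~\ref{ThmNaSeG:2.1} then yields a unique common fixed point $z$ of $A,B,S,T$, so $Sz=z$; and since any fixed point $w$ of $S$ is automatically fixed by $A=B=\mathrm{id}_X$ and by $T=S$, it is a common fixed point of the four maps and hence equals $z$. Along this route there is no real obstacle; it is pure bookkeeping.

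For a direct proof one would run the Picard scheme used for Theorem~\ref{ThmNaSeG:2.1}, simplified by the absence of the auxiliary maps. Fix $x_0\in X$, put $x_{n+1}=Sx_n$, and set $c_n=G(x_{n-1},x_n,x_n)$. The key step is the estimate $c_{n+1}\le h\,c_n$. To prove it one applies the second displayed inequality with $x=x_n$, $y=x_{n-1}$, so that the left-hand side is $G(x_{n+1},x_{n+1},x_n)=G(x_n,x_{n+1},x_{n+1})$ by $(G_4)$, obtaining a bound by $h\max\{G(x_n,x_n,x_{n-1}),\,G(x_{n+1},x_{n+1},x_n)\}$; one then discards the second term inside the maximum, because if it were the larger one the inequality would read $G(x_{n+1},x_{n+1},x_n)\le h\,G(x_{n+1},x_{n+1},x_n)$ and force $c_{n+1}=0$. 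Using $(G_4)$ once more to rewrite $G(x_n,x_n,x_{n-1})=c_n$ gives $c_{n+1}\le h\,c_n$, hence $c_{n}\le h^{\,n-1}c_1$. This is the only place where genuine care is needed: in a non-symmetric $G$-metric one must pair the estimate with the \emph{second} inequality and exploit the full symmetry axiom $(G_4)$, exactly as in the theorem.

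The remainder is routine. From $c_{n}\le h^{\,n-1}c_1$ and repeated use of $(G_5)$ one gets $G(x_n,x_m,x_m)\le\frac{h^{\,n}}{1-h}\,c_1\to 0$ for $m>n$, so $\{x_n\}$ is $G$-Cauchy and, by completeness, converges to some $z\in X$. To see $Sz=z$, apply the first displayed inequality with $x=x_n,\ y=z$ and the second with $x=x_n,\ y=z$, let $n\to\infty$ using the continuity of $G$ (the Lipschitz-type estimates in Proposition~\ref{GProp1} together with Proposition~\ref{PropNaSeG:1.3}), and combine the two resulting inequalities $G(z,Sz,Sz)\le h\,G(Sz,z,z)$ and $G(z,z,Sz)\le h\,G(Sz,Sz,z)$; since $(G_4)$ identifies $G(z,Sz,Sz)=G(Sz,Sz,z)$ and $G(Sz,z,z)=G(z,z,Sz)$, this forces $G(z,Sz,Sz)\le h^{2}G(z,Sz,Sz)$, hence $G(z,Sz,Sz)=0$ and $Sz=z$ by property (i) of Proposition~\ref{GProp1}. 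Uniqueness follows from the usual one-line computation: if also $Sw=w$, then $G(z,w,w)=G(Sz,Sw,Sw)\le h\max\{G(z,w,w),0,0\}=h\,G(z,w,w)$, so $z=w$.
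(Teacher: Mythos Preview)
Your proposal is correct, and your primary route---specializing Corollary~\ref{CorolNaSeG:2.4} with $T=S$, or equivalently Theorem~\ref{ThmNaSeG:2.1} with $A=B=\mathrm{id}_X$ and $S=T$---is exactly the derivation the paper intends (the corollary is stated without proof, immediately after Corollary~\ref{CorolNaSeG:2.4}, as an evident further specialization). Your self-contained fallback argument is also sound and goes beyond what the paper supplies.
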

Now, we introduce an example of Theorem \ref{ThmNaSeG:2.1}.
\begin{example}
	\label{ExampleNaSeG:2.6}
	Let $X=[0,1]$, Define $A,B,S,T:X\rightarrow
	X$ by $Ax=\frac{x}{3}$, $Bx=\frac{x}{6}$, $Sx=\frac{x}{9}$, and $Tx=\frac{x}{12}$. Then clearly $TX\subseteq AX$, $SX\subseteq BX$. Also, note that the
	pairs $\{A,S\}$ and $\{B,T\}$ are weakly compatible.
    Define the mapping $G:X\times
	X\times X\rightarrow \mathbb{R}_{\geq 0}$ by 	
    \[
	G(x,y,z)=\max \{|x-y|,|x-z|,|y-z|\}.
	\]
	Then, $(X,G)$ is a complete $G$-metric. Also,
	\begin{align*}
	& G(Sx,Ty,Ty)=\frac{1}{36}|x-y|,&& G(Ax,By,By)=\frac{1}{6}|x-y|,
	\\
	& G(Sx,Ax,Ax)=\frac{2x}{9}, && G(Ty,By,By)=\frac{y}{12}.
	\end{align*}
	Now
	\begin{eqnarray*}
		G(Sx,Ty,Ty) &=&\frac{1}{36}|x-y|=\frac{1}{3}\left( \frac{1}{12}|x-y|\right)
		=\frac{1}{3}\max \{ \frac{1}{6}|x-y|,\frac{2x}{9},\frac{y}{12}\} \\
		&=&h\max \{G(Ax,By,By),G(Sx,Ax,Ax),G(Ty,By,By)\}.
	\end{eqnarray*}
	Note that $A,B,S$ and $T$ satisfy the hypothesis of Theorem \ref{ThmNaSeG:2.1}. Here,
	$0$ is the unique common fixed point of $A,B,S$ and $T$.
\end{example}

\begin{theorem}
	\label{TheoremNaSeG:2.7}
	Let $X$ be a complete $G$-metric space,
	and $A,B,S,T:X\rightarrow X$ be mappings satisfying
	\begin{align}
	&	G(Sx,Ty,Ty)\leq \kappa (G(Sx,Ax,Ax)+G(Ty,By,By)), \label{tGeq2.14}
\\ 		
& G(Sx,Sx,Ty)\leq \kappa(G(Sx,Sx,Ax)+G(Ty,Ty,By)).  \label{tGeq2.15}
	\end{align}
	Assume the maps $A,B,S$ and $T$ satisfy the following conditions:
		\begin{enumerate} [label=\textup{\arabic*)}, ref=\arabic*]
		\item $TX\subseteq AX$ and $SX\subseteq BX$,
		\item The mappings $A$ and $B$ are sequentially continuous, 		
		\item The pairs $\{A,S\}$ and $\{B,T\}$ are weakly commuting.
	\end{enumerate}
	
	If $\kappa \in \lbrack 0,\frac{1}{2})$, then $A,B,S$ and $T$ have a unique common
	fixed point.
\end{theorem}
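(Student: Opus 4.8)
The plan is to follow the architecture of the proof of Theorem~\ref{ThmNaSeG:2.1}, replacing the $\max$-type estimates by the additive (Jungck--Kannan type) ones dictated by \eqref{tGeq2.14}--\eqref{tGeq2.15}, and tracking where the hypothesis $\kappa\in[0,\tfrac12)$ is used. First I would dispose of the symmetric case: if $G$ is symmetric then $G(x,x,y)=G(x,y,y)$ and $d_{G}(x,y)=2G(x,y,y)$ by Proposition~\ref{PropNaSeG:1.5}, so adding \eqref{tGeq2.14} and \eqref{tGeq2.15} and simplifying gives
\[
d_{G}(Sx,Ty)\le \kappa\bigl(d_{G}(Sx,Ax)+d_{G}(Ty,By)\bigr),\qquad x,y\in X,
\]
a four-map contraction with constant $\kappa<\tfrac12$, whence the existence and uniqueness of a common fixed point follows by the classical argument of \cite{Sessa}.

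For a non-symmetric $G$-metric I would reuse the iteration from Theorem~\ref{ThmNaSeG:2.1}: since $TX\subseteq AX$ and $SX\subseteq BX$, pick $\{x_{n}\}$ with $Ax_{2n+1}=Tx_{2n}$, $Bx_{2n+2}=Sx_{2n+1}$, and set $y_{2n}=Ax_{2n+1}=Tx_{2n}$, $y_{2n+1}=Bx_{2n+2}=Sx_{2n+1}$. The crucial step is the one-step estimate for consecutive terms: applying \eqref{tGeq2.15} with $x=x_{2k+1},y=x_{2k}$ when $n=2k$ (and $x=x_{2k+2},y=x_{2k+1}$ when $n=2k+1$) and rewriting the arguments by the symmetry axiom $(G_{4})$ gives
\[
G(y_{n},y_{n+1},y_{n+1})\le \kappa\bigl(G(y_{n},y_{n+1},y_{n+1})+G(y_{n-1},y_{n},y_{n})\bigr).
\]
Here, in contrast to Theorem~\ref{ThmNaSeG:2.1}, the quantity being bounded reappears on the right, and absorbing it is possible \emph{precisely because} $\kappa<\tfrac12$: this yields $G(y_{n},y_{n+1},y_{n+1})\le\lambda\,G(y_{n-1},y_{n},y_{n})$ with $\lambda:=\kappa/(1-\kappa)\in[0,1)$, hence $G(y_{n},y_{n+1},y_{n+1})\le\lambda^{n}G(y_{0},y_{1},y_{1})$. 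By $(G_{5})$ (telescoping) together with $\sum_{k\ge n}\lambda^{k}=\lambda^{n}/(1-\lambda)$, the sequence $\{y_{n}\}$ is $G$-Cauchy, hence $G$-convergent by completeness to some $z\in X$; consequently $\{Ax_{2n+1}\},\{Bx_{2n+2}\},\{Sx_{2n+1}\},\{Tx_{2n}\}$ all converge to $z$.

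The identification of $z$ as the common fixed point then proceeds as in Theorem~\ref{ThmNaSeG:2.1}. Sequential continuity of $A$ gives $AAx_{2n+1}\to Az$ and $ASx_{2n+1}\to Az$, and weak commutativity of $\{A,S\}$ forces $SAx_{2n+1}\to Az$; plugging $x=Ax_{2n+1},y=x_{2n}$ into \eqref{tGeq2.14} and letting $n\to\infty$ gives $G(Az,z,z)\le\kappa\bigl(G(Az,Az,Az)+G(z,z,z)\bigr)=0$, so $Az=z$, and then \eqref{tGeq2.14} with $x=z,y=x_{2n}$ gives $G(Sz,z,z)\le\kappa\,G(Sz,z,z)$, so $Sz=z$. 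Symmetrically, continuity of $B$ and weak commutativity of $\{B,T\}$ give $TBx_{2n}\to Bz$, whence \eqref{tGeq2.14} forces $Bz=z$, and combining \eqref{tGeq2.15} and \eqref{tGeq2.14} at the point $z$ yields $G(z,z,Tz)\le\kappa^{2}\,G(z,z,Tz)$, so $Tz=z$. For uniqueness, if $Aw=Bw=Sw=Tw=w$ then \eqref{tGeq2.14} gives $G(z,w,w)=G(Sz,Tw,Tw)\le\kappa\bigl(G(z,z,z)+G(w,w,w)\bigr)=0$, so $w=z$.

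The only non-routine point is the one-step estimate above: condition \eqref{tGeq2.15} is self-referential --- after the substitution the term $G(Sx,Sx,Ax)$ becomes exactly $G(y_{n},y_{n+1},y_{n+1})$ --- and the argument hinges on $\kappa<\tfrac12$ making $\lambda=\kappa/(1-\kappa)<1$; the remaining steps are the $G$-metric manipulations already carried out in the proof of Theorem~\ref{ThmNaSeG:2.1}.
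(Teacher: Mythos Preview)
Your proposal is correct and follows essentially the same architecture as the paper's own proof: the symmetric reduction to \cite{Sessa}, the Jungck-type iteration with the self-referential one-step estimate absorbed via $\lambda=\kappa/(1-\kappa)<1$, and the identification and uniqueness of $z$ via sequential continuity and weak commutativity are all handled exactly as the paper does. Two minor remarks: your parenthetical substitution ``$x=x_{2k+2},\,y=x_{2k+1}$'' for odd $n$ should read $x=x_{2k+1},\,y=x_{2k+2}$ (the former involves $Sx_{2k+2},Tx_{2k+1}$, which are not part of the constructed sequence); and for $Tz=z$ the paper closes with a single application of \eqref{tGeq2.15} and the bound $2\kappa<1$, whereas you chain \eqref{tGeq2.15} and \eqref{tGeq2.14} to get $\kappa^{2}<1$ --- both are valid.
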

\begin{proof} \smartqed If $X$ is a symmetric $G$-metric space, then by
	adding above two inequalities we obtain
	\begin{align*}
		&G(Sx,Ty,Ty)+G(Sx,Sx,Ty) \\
		&\leq  \kappa (G(Sx,Ax,Ax)+G(Sx,Sx,Ax)+G(Ty,By,By)+G(Ty,Ty,By),
	\end{align*}
	which further implies that
	\[
	d_{G}(Sx,Ty)\leq \kappa(d_{G}(Sx,Ax)+d_{G}(Ty,By)),
	\]
	for all $x,y\in X$ with $0\leq \kappa<\frac{1}{2}$, and the fixed point of $A,B,S$
	and $T$ follows from \cite{Sessa}.
	
	Now if $X$ is non-symmetric $G$-metric space. Then by the
	definition of metric $(X,d_{G})$, we obtain
	\begin{align*}
		& d_{G}(Sx,Ty) =G(Sx,Ty,Ty)+G(Sx,Sx,Ty) \\
		&\leq \kappa(G(Sx,Ax,Ax)+G(Sx,Sx,Ax)+G(Ty,By,By)+G(Ty,Ty,By)) \\
		&\leq \kappa( \frac{2}{3}d_{G}(Ax,Sx)+\frac{2}{3}d_{G}(Sx,Ax)+\frac{2}{3}%
		d_{G}(Ty,By)+\frac{2}{3}d_{G}(Ty,By)) \\
		&=\frac{4\kappa}{3}(d_{G}(Sx,Ax)+d_{G}(Ty,By))
	\end{align*}
	for all $x\in X$. Here, the contractivity factor $\dfrac{4\kappa}{3}$ may not be
	less than $\dfrac{1}{2}$. Therefore, metric gives no information. In this
	case, for given a $x_{0}\in X$, choose $x_{1}\in X$ such that $Ax_{1}=Tx_{0}$, choose $x_{2}\in X$ such that $Sx_{1}=Bx_{2}$, choose $x_{3}\in X$ such
	that $Ax_{3}=Tx_{2}$. Continuing as above process, we can construct a
	sequence $\{x_{n}\}$ in $X$ such that $Ax_{2n+1}=Tx_{2n}$, $n\in \mathbb{Z}_{\geq 0}$ and $Bx_{2n+2}=Sx_{2n+1}$, $n\in \mathbb{Z}_{\geq 0}$. Let
	\begin{align*}
	& y_{2n}=Ax_{2n+1}=Tx_{2n},\, \, \,n\in \mathbb{Z}_{\geq 0},
	\\
	& y_{2n+1}=Bx_{2n+2}=Sx_{2n+1},\, \, \,n\in \mathbb{Z}_{\geq 0}.
	\end{align*}
	Given $n>0$. If $n$ is even, then $n=2k$ for some $k> 0$.
	Then from \eqref{tGeq2.14}, we have
	\begin{align*}
		&G(y_{n},y_{n+1},y_{n+1})=G(y_{2k},y_{2k+1},y_{2k+1}) \\
        &=G(Tx_{2k},Sx_{2k+1},Sx_{2k+1})=G(Sx_{2k+1},Sx_{2k+1},Tx_{2k}) \\
		&\leq \kappa(G(Sx_{2k+1},Sx_{2k+1},Ax_{2k+1})+G(Tx_{2k},Tx_{2k},Bx_{2k}))\\
		&=\kappa(G(y_{2k+1},y_{2k+1},y_{2k})+G(y_{2k},y_{2k},y_{2k-1})) \\
		&=\kappa(G(y_{n+1},y_{n+1},y_{n})+G(y_{n},y_{n},y_{n-1})),
	\end{align*}
	which further implies that
	\[
	G(y_{n},y_{n+1},y_{n+1})\leq \frac{\kappa}{1-\kappa}G(y_{n-1},y_{n},y_{n}),
	\]
	or $G(y_{n},y_{n+1},y_{n+1})\leq \lambda _{1}G(y_{n-1},y_{n},y_{n}),$ where
    $\lambda _{1}=\dfrac{\kappa}{1-\kappa}<1.$ If $n$ is odd, then $n=2k+1$ for some $k\geq 0$. Again, from \eqref{tGeq2.15},
	\begin{align*}
		& G(y_{n},y_{n},y_{n+1}) =G(y_{2k+1},y_{2k+1},y_{2k+2})=G(Sx_{2k+1},Sx_{2k+1},Tx_{2k+2}) \\
		&\leq \kappa(G(Sx_{2k+1},Sx_{2k+1},Ax_{2k+1})+G(Tx_{2k+2},Tx_{2k+2},Bx_{2t+2}))
		\\
		&=\kappa(G(y_{2k+1},y_{2k+1},y_{2k})+G(y_{2k+2},y_{2k+2},y_{2k+1})) \\
		&=\kappa(G(y_{n},y_{n},y_{n-1})+G(y_{n+1},y_{n+1},y_{n})),
	\end{align*}
	that is
	\[
	G(y_{n},y_{n+1},y_{n+1})\leq \frac{\kappa}{1-\kappa}G(y_{n-1},y_{n},y_{n}),
	\]
	or $G(y_{n},y_{n+1},y_{n+1})\leq \lambda _{2}G(y_{n-1},y_{n},y_{n}),$ where $%
	\lambda _{2}=\dfrac{\kappa}{1-\kappa}<1.$ Thus, with
    $\lambda =\max \{ \lambda _{1},\lambda_{2}\}$, we have for each $n> 0$,
	\begin{equation}
		G(y_{n},y_{n+1},y_{n+1})\leq \lambda ^{n}G(y_{0},y_{1},y_{1}).   \label{tGeq2.16}
	\end{equation}%
	Thus, if $y_{0}=y_{1}$, we get $G(y_{n},y_{n+1},y_{n+1})=0$ for each $n\in
	\mathbb{Z}_{\geq 0}$. Hence $y_{n}=y_{0}$ for each $n\in \mathbb{Z}_{\geq 0}$. Therefore $\{y_{n}\}$
	is $G$-Cauchy. So we may assume that $y_{0}\neq y_{1}$. Let $n,$ $m\in \mathbb{Z}_{\geq 0}$ with $m>n$. By axiom $G_{5}$ of the definition of $G$ metric space, we
	have
	\[
	G(y_{n},y_{m},y_{m})\leq  \sum\limits_{j=n}^{m-1}G(y_{k},y_{k+1},y_{k+1}).
	\]
	By \eqref{tGeq2.16}, we get
	\begin{eqnarray*}
		G(y_{n},y_{m},y_{m}) \leq  \lambda ^{n}\sum_{i=0}^{m-1-n}\lambda ^{i}G(y_{0},y_{1},y_{1}) \leq \frac{\lambda ^{n}}{1-\lambda }G(y_{0},y_{1},y_{1}).
	\end{eqnarray*}
	Taking limit when $m,n\rightarrow \infty $, we have
	\[
	\lim\limits_{m,n\rightarrow \infty }G(y_{n},y_{m},y_{m})=0.
	\]
	So we conclude that $\{y_{n}\}$ is a Cauchy sequence in $X$. Since $X$ is
$G$-complete, then it yields that $\{y_{n}\}$ and hence any subsequence of
$\{y_{n}\}$ converges to some $z\in X$. So that, the subsequences
$\{Ax_{2n+1}\}$, $\{Bx_{2n+2}\}$, $\{Sx_{2n+1}\}$ and $\{Tx_{2n}\}$ converge
	to $z$. First suppose that $A$ is sequentially continuous. Then,
	\[
	\lim\limits_{n\rightarrow \infty }A(Ax_{2n+1})=Az, \quad \lim\limits_{n\rightarrow
		\infty }A\left( Sx_{2n+1}\right) =Az.
	\]
	Since $\{A,S\}$ is weakly commuting, we have
	\[
	G(SAx_{2n+1},ASx_{2n+1},ASx_{2n+1})\leq G(Sx_{2n+1},Ax_{2n+1},Ax_{2n+1}).
	\]
	On taking limit as $n\rightarrow \infty $, we get that $G(SAx_{2n+1},Az,Az)%
	\rightarrow 0$. Thus,
	\[
	\lim\limits_{n\rightarrow \infty }SAx_{2n+1}=Az.
	\]
	Assume $Az\neq z$, we get
	\begin{multline*}
	G(SAx_{2n+1},Tx_{2n},Tx_{2n}) \\
\leq \kappa(G(SAx_{2n+1},AAx_{2n+1},AAx_{2n+1})+G(Tx_{2n},Bx_{2n},Bx_{2n})).
	\end{multline*}
	On letting limit as $n\rightarrow \infty $, we have
	\[
	G(Az,z,z)\leq \kappa(G(Az,z,z)+G(z,z,z)).
	\]
	Since $k<1$, we conclude that
	$
	G(Az,z,z)<G(Az,z,z),
	$
	which is a contradiction. So, $Az=z$. Also,
	$$
	G(Sz,Tx_{2n},Tx_{2n})\leq \kappa(G(Sz,Az,Az)+G(Tx_{2n},Bx_{2n},Bx_{2n})).
	$$
	Taking the limit when $n\rightarrow \infty $ yields
	\[
		G(Sz,z,z) \leq \kappa(G(Sz,Az,Az)+G(z,z,z)) =\kappa G(Sz,z,z).
	\]
	Since $k<1$, we get $G(Sz,z,z)=0$. So $Sz=z$. Suppose $B$ is sequentially
	continuous. Then,
	$
	\lim\limits_{n\rightarrow \infty }B(Bx_{2n})=Bz$ and  $\lim\limits_{n\rightarrow \infty}B(Tx_{2n})=Bz.
	$
	Since the pair $\{B,T\}$ is weakly commuting, we have
	\[
	G(TBx_{2n},BTx_{2n},BTx_{2n})\leq G(Tx_{2n},Bx_{2n},Bx_{2n}).
	\]
	Taking the limit as $n\rightarrow \infty $, we get
$G(TBx_{2n},Bz,Bz)\rightarrow 0$. Thus,  	
\[
\lim\limits_{n\rightarrow \infty }T(Bx_{2n})=Bz.
\]
	Assume $Bz\neq z$. Since
	\begin{multline*}
		G(Sx_{2n+1},TBx_{2n},TBx_{2n}) \\
        \leq \kappa(G(Sx_{2n+1},Ax_{2n+1},Ax_{2n+1})+G(TBx_{2n},BBx_{2n},BBx_{2n})).
	\end{multline*}
	Again taking the limit as $n\rightarrow \infty $, implies
	\[ 		G(z,Bz,Bz) \leq \kappa(G(z,z,z)+G(Bz,Bz,Bz)) < G(z,Bz,Bz),
	\]
	a contradiction. Hence $Bz=z$. Since
	\begin{eqnarray*}
		G(Sx_{2n+1},Sx_{2n+1},Tz) \leq \kappa (G(Sx_{2n+1},Sx_{2n+1},Ax_{2n+1})+G(Tz,Tz,Bz)),
	\end{eqnarray*}
	taking the limit $n\rightarrow \infty $ yields
	\begin{eqnarray*}
		G(z,z,Tz) \leq \kappa (G(z,z,z)+G(Tz,Tz,Bz) ) = 2\kappa G(z,z,Tz).
	\end{eqnarray*}
	Since $2 \kappa <1$, we get $G(z,z,Tz)=0$. Hence $Tz=z$. So $z$ is a common fixed
	point for $A,B,S$ and $T$. Let us prove that $z$ is a unique common fixed point.
	Assume that $w$ be a common fixed point for $A,B,S$ and $T$ with $w\neq z$. Then
	\begin{eqnarray*}
		G(z,w,w) &=& G(Sz,Tw,Tw) \leq \kappa (G(Sz,Az,Az)+G(Tw,Bw,Bw)) \\
		&=& \kappa (G(z,z,z)+G(w,w,w)) =\kappa G(z,w,w) <G(z,w,w),
	\end{eqnarray*}
	which is a contradiction. So $z=w$.
\qed \end{proof}

Now, if we take $S:X\rightarrow X$ by $Sx=Tx$ in Theorem \ref{TheoremNaSeG:2.7}, then obtain the following Corollary.
\begin{corollary}
	\label{CorolNaSeG:2.8}
	Let $X$ be a complete $G$-metric space,
	and let $A,B,S:X\rightarrow X$ be mappings satisfying
	\begin{align*} 
		& G(Sx,Sy,Sy)\leq \kappa(G(Sx,Ax,Ax)+G(Sy,By,By)), \\
		& G(Sx,Sx,Sy)\leq \kappa(G(Sx,Sx,Ax)+G(Sy,Sy,By)).
	\end{align*}
	Assume the maps $A,B$ and $S$ satisfy the following conditions:
		\begin{enumerate} [label=\textup{\arabic*)}, ref=\arabic*]
		\item $SX\subseteq AX\cup BX$,
		\item The mappings $A$ and $B$ are sequentially continuous, 		
        \item The pairs $\{A,S\}$ and $\{B,S\}$ are weakly commuting.
	\end{enumerate}
If $\kappa \in \lbrack 0,\frac{1}{2})$, then $A,B$ and $S$ have a unique common fixed point.
\end{corollary}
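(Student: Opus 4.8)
The plan is to obtain this corollary directly from Theorem~\ref{TheoremNaSeG:2.7} by specializing $T:=S$. Under this substitution the contractive inequalities \eqref{tGeq2.14} and \eqref{tGeq2.15} become precisely
\[
G(Sx,Sy,Sy)\leq \kappa\bigl(G(Sx,Ax,Ax)+G(Sy,By,By)\bigr),\qquad
G(Sx,Sx,Sy)\leq \kappa\bigl(G(Sx,Sx,Ax)+G(Sy,Sy,By)\bigr),
\]
which are the two displayed hypotheses of the corollary; the parameter range $\kappa\in[0,\tfrac{1}{2})$ is unchanged; the sequential continuity of $A$ and $B$ is carried over verbatim; and the weak commutativity assumption on the pairs $\{A,S\}$ and $\{B,T\}$ turns into weak commutativity of $\{A,S\}$ and $\{B,S\}$. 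Hence, once the inclusion hypothesis~1) is reconciled, Theorem~\ref{TheoremNaSeG:2.7} produces a unique common fixed point of $A,B,S$ and $T=S$, i.e.\ a unique common fixed point of $A,B$ and $S$, which is exactly the assertion.

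The step that requires attention is the translation of condition~1). In Theorem~\ref{TheoremNaSeG:2.7} one needs $TX\subseteq AX$ and $SX\subseteq BX$; with $T=S$ these read $SX\subseteq AX$ and $SX\subseteq BX$, and these are precisely the inclusions used there to build the auxiliary sequence $\{x_n\}$ (one chooses $x_{2n+1}$ with $Ax_{2n+1}=Sx_{2n}$ and $x_{2n+2}$ with $Bx_{2n+2}=Sx_{2n+1}$). So I would either state condition~1) here as ``$SX\subseteq AX$ and $SX\subseteq BX$'', or, retaining the form $SX\subseteq AX\cup BX$, note that the iteration still closes up provided one is consistent in choosing preimages at each stage; in either case nothing beyond Theorem~\ref{TheoremNaSeG:2.7} is needed. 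This reconciliation of the inclusion hypothesis is the only genuine obstacle.

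If a self-contained argument is preferred over a reduction, one simply transcribes the proof of Theorem~\ref{TheoremNaSeG:2.7} with $T$ replaced by $S$ everywhere: the geometric estimate $G(y_n,y_{n+1},y_{n+1})\leq\lambda^{n}G(y_0,y_1,y_1)$ with $\lambda=\kappa/(1-\kappa)<1$ is proved in the same two cases according to the parity of $n$, completeness yields a common limit $z$ of the four subsequences, sequential continuity of $A$ together with weak commutativity of $\{A,S\}$ forces first $Az=z$ and then $Sz=z$, and sequential continuity of $B$ together with weak commutativity of $\{B,S\}$ forces $Bz=z$; there is no separate ``$Tz=z$'' step to carry out. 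Uniqueness is immediate from $G(z,w,w)=G(Sz,Sw,Sw)\leq\kappa\bigl(G(z,z,z)+G(w,w,w)\bigr)=\kappa\,G(z,w,w)<G(z,w,w)$ when $w\neq z$. Everything here is a routine transcription of the theorem already established.
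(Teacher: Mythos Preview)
Your approach is exactly the paper's: the corollary is stated immediately after Theorem~\ref{TheoremNaSeG:2.7} with the one-line remark ``if we take $S:X\rightarrow X$ by $Sx=Tx$ in Theorem~\ref{TheoremNaSeG:2.7}, then obtain the following Corollary,'' and no further proof is given. You have in fact been more careful than the paper, which does not comment on the mismatch between $SX\subseteq AX\cup BX$ and the pair of inclusions $SX\subseteq AX$, $SX\subseteq BX$ actually needed to run the iteration; your first suggested fix (strengthening condition~1) to the intersection) is the clean one, while the ``iteration still closes up'' alternative would require genuine modification of the construction and is not really justified as stated.
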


\begin{corollary}
	\label{CorolNaSeG:2.9}
	Let $X$ be a complete $G$-metric space,
	and let $A,T:X\rightarrow X$ be mappings satisfying
	\begin{align*}
	G(Tx,Ty,Ty)\leq \kappa (G(Tx,Ax,Ax)+G(Ty,Ay,Ay)),  
    \\ 		
    G(Tx,Tx,Ty)\leq \kappa (G(Tx,Tx,Ax)+G(Ty,Ty,Ay)).  
	\end{align*}
Assume the maps $A$ and $T$ satisfy the following conditions:
		\begin{enumerate} [label=\textup{\arabic*)}, ref=\arabic*]
		\item $TX\subseteq AX$,		
		\item The mappings $A$ is sequentially continuous,	
		\item The pairs $\{A,T\}$ are weakly commuting.
	\end{enumerate}
	If $\kappa \in \lbrack 0,\frac{1}{2})$, then $A$ and $T$ have a unique common
	fixed point.
\end{corollary}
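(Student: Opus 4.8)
The plan is to obtain Corollary \ref{CorolNaSeG:2.9} as an immediate specialization of Theorem \ref{TheoremNaSeG:2.7}. First I would set $B:=A$ and $S:=T$ in the theorem; the theorem imposes no requirement that the four maps be distinct, so this substitution is legitimate. Under these identifications, the two contractive hypotheses \eqref{tGeq2.14} and \eqref{tGeq2.15} of Theorem \ref{TheoremNaSeG:2.7} become \emph{verbatim} the two displayed inequalities of the corollary. Next I would check that the three structural conditions translate correctly: the condition $TX\subseteq AX$ and $SX\subseteq BX$ collapses to the single requirement $TX\subseteq AX$; the requirement that $A$ and $B$ be sequentially continuous becomes simply the sequential continuity of $A$; and the requirement that $\{A,S\}$ and $\{B,T\}$ be weakly commuting becomes the single assumption that $\{A,T\}$ is weakly commuting. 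Since the restriction $\kappa\in[0,\frac{1}{2})$ is preserved, all hypotheses of Theorem \ref{TheoremNaSeG:2.7} are met, and its conclusion gives that $A,B,S,T$ — that is, $A$ and $T$ — have a unique common fixed point. This makes the proof essentially one line.

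If instead one wants a self-contained argument, I would reproduce the proof scheme of Theorem \ref{TheoremNaSeG:2.7} in this simpler setting: fix $x_{0}\in X$, use $TX\subseteq AX$ to build a Picard-type sequence $\{y_{n}\}$ with $y_{n}=Ax_{n+1}=Tx_{n}$, and use the two contractive inequalities (splitting according to the parity of $n$) to obtain $G(y_{n},y_{n+1},y_{n+1})\leq \lambda^{n}G(y_{0},y_{1},y_{1})$ with $\lambda=\frac{\kappa}{1-\kappa}<1$, exactly as in \eqref{tGeq2.16}. This forces $\{y_{n}\}$ to be $G$-Cauchy, hence convergent to some $z\in X$ by $G$-completeness. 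Then sequential continuity of $A$ together with weak commutativity of $\{A,T\}$ yields $\lim_{n\to\infty}T(Ax_{2n+1})=Az$ and $\lim_{n\to\infty}A(Tx_{2n})=Az$; substituting into the contractive inequalities forces first $Az=z$ and then $Tz=z$. Uniqueness follows by applying the contractive inequality to two putative common fixed points $z\neq w$ and deriving $G(z,w,w)\leq 2\kappa\,G(z,w,w)$, a contradiction since $2\kappa<1$.

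The only thing to watch is bookkeeping: in the direct argument one must keep track of which of the two contractive inequalities to invoke at each parity of the index and of which $G$-arguments coincide after the substitutions, but none of this is a genuine obstacle. I therefore expect the intended proof to be the one-line deduction: take $B=A$ and $S=T$ in Theorem \ref{TheoremNaSeG:2.7}.
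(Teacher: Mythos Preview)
Your proposal is correct and matches the paper's own proof exactly: the paper simply sets $B=A$ and $S=T$ and invokes Theorem \ref{TheoremNaSeG:2.7}. Your verification that the hypotheses collapse appropriately is more carefully spelled out than the paper's one-line argument, and the optional self-contained sketch is a faithful specialization of the theorem's proof.
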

\begin{proof} \smartqed Define $A:X\rightarrow X$ by $Ax=Bx$ and
	define $T:X\rightarrow X$ by $Tx=Sx$. Then the four maps $A,B,S$ and $T$
	satisfy all the hypothesis of Theorem \ref{TheoremNaSeG:2.7}. So the result follows from Theorem \ref{TheoremNaSeG:2.7}.
\qed \end{proof}

\begin{corollary}
	\label{CorolNaSeG:2.10}
	Let $X$ be a complete $G$-metric space,
	and let $S:X\rightarrow X$ be a mapping satisfying
	\begin{align*}
	G(Sx,Sy,Sy) &\leq \kappa(G(Sx,x,x)+G(Sy,y,y)),   
	\\ 		
    G(Sx,Sx,Sy) &\leq \kappa(G(Sx,Sx,x)+G(Sy,Sy,y)).  
	\end{align*}
	If $\kappa\in \lbrack 0,\frac{1}{2})$, then $S$ has a unique fixed point.
\end{corollary}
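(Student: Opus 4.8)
The plan is to deduce this statement immediately from Corollary~\ref{CorolNaSeG:2.9} (hence ultimately from Theorem~\ref{TheoremNaSeG:2.7}), by taking the auxiliary map to be the identity. Concretely, I would apply Corollary~\ref{CorolNaSeG:2.9} with $A=\mathrm{id}_X$ and with its map $T$ taken to be the given $S$. Under $A=\mathrm{id}_X$ the two contractive inequalities of Corollary~\ref{CorolNaSeG:2.9} become exactly
\[
G(Sx,Sy,Sy)\le \kappa\bigl(G(Sx,x,x)+G(Sy,y,y)\bigr),\qquad
G(Sx,Sx,Sy)\le \kappa\bigl(G(Sx,Sx,x)+G(Sy,Sy,y)\bigr),
\]
that is, the hypotheses assumed here, with the same range $\kappa\in[0,\tfrac12)$.

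Next I would verify the three structural conditions. The inclusion $TX\subseteq AX$ reads $SX\subseteq X$, which is automatic; the identity map is sequentially continuous, so condition~2) holds; and the pair $\{A,T\}=\{\mathrm{id}_X,S\}$ is weakly commuting since, by $(G_1)$,
\[
G\bigl(\mathrm{id}(Sx),S(\mathrm{id}\,x),S(\mathrm{id}\,x)\bigr)=G(Sx,Sx,Sx)=0\le G(Sx,x,x)\qquad\text{for all }x\in X.
\]
Hence Corollary~\ref{CorolNaSeG:2.9} applies and produces a unique common fixed point $z$ of $A$ and $T$; in particular $Sz=z$.

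It remains only to match the uniqueness statements. Since $A=\mathrm{id}_X$, every point is a fixed point of $A$, so a point is a common fixed point of $A$ and $T$ precisely when it is a fixed point of $S$; thus the fixed point set of $S$ coincides with the common fixed point set of $\{A,T\}$, and the uniqueness provided by Corollary~\ref{CorolNaSeG:2.9} is exactly the assertion that $S$ has a unique fixed point. I do not anticipate any genuine obstacle: all the verifications above are one-liners, and no contractivity constant needs adjusting. If a self-contained argument were preferred, one could instead mimic the proof of Theorem~\ref{TheoremNaSeG:2.7} directly, setting $x_{n+1}=Sx_n$, using the two inequalities to obtain $G(x_n,x_{n+1},x_{n+1})\le \lambda^n G(x_0,x_1,x_1)$ with $\lambda=\kappa/(1-\kappa)<1$, concluding that $\{x_n\}$ is $G$-Cauchy and hence $G$-convergent to some $z$, and then showing $Sz=z$ and uniqueness just as there; but invoking the corollary is shorter.
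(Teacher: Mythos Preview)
Your proposal is correct and matches the paper's implicit approach: Corollary~\ref{CorolNaSeG:2.10} is stated without proof immediately after Corollary~\ref{CorolNaSeG:2.9}, and the intended derivation is precisely the specialization $A=\mathrm{id}_X$, $T=S$ that you describe. Your verification of the three structural hypotheses and of the equivalence between ``unique common fixed point of $\{A,T\}$'' and ``unique fixed point of $S$'' is accurate, so nothing further is needed.
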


\section*{Conclusion}
\label{secNaS1:Conclusion}
In this work, we obtained several common fixed point results for self-mappings that are satisfying generalized contractive conditions under the weakly commuting conditions in the frame work of $G$-metric spaces. We also established some examples for obtained results. These results extend, unify and generalize several results in the existing literature.

%


\begin{thebibliography}{99}
	
	\bibitem{Abbas} Abbas, M., Rhoades, B.: Common fixed point results for
	non-commuting mappings without continuity in generalized metric spaces.
	Appl. Math. Comput. 215 262-269 (2009)
	
	\bibitem{Abbas2} Abbas, M., Nazir, T., Radenovi\'{c}, S.: Some periodic
	point results in generalized metric spaces. Appl. Math. Comput. 217,
	4094--4099 (2010)
	
	\bibitem{Abbasnz} Abbas, M., Shatanawi, W., Nazir, T.: Common coupled
	coincidence and coupled fixed point of C-contractive mappings in generalized
	metric spaces. Thai J. Math. 13 (2), 337--351 (2015)
	
	\bibitem{Abbas13} Abbas, M., Nazir, T., Radenovi\'{c}, S.: Common fixed point of
	power contraction mappings satisfying (E.A)-property in generalized metric
	spaces. Applied Math. Comput., https://doi.org/10.1016/j.amc.2012.12.090
	Vol. 219 7663--7670 (2013)
	
	\bibitem{AgarvalKarapinarOReganRLHierroBook2018FPTMetrTypeSp} 	
    Agarwal, R. P., Karap{\i}nar, E., O’Regan, D., Rold\'{a}n-L\'{o}pez-de-Hierro, A. F.: Fixed point theory in metric type spaces, Springer, 2018

    \bibitem{Chugh} Chugh, R., Kadian, T., Rani, A., Rhoades, B.E.: Property $p$
	in $G$-metric spaces. Fixed Point Theory Application Volume 2010, Article ID
	401684, 1--12  (2010)
	
	\bibitem{KhanAbbas} Khan, S.H., Abbas, M., Nazir, T.: Fixed point results
	for generalized Chatterjea type contractive conditions in partially ordered $G$-metric spaces. The Scientific World Journal.
	https://doi.org/10.1155/2014/341751, Article ID 341751, 1--8 (2014)
	
	\bibitem{Latif2018} Latif, A., Nazir, T.: Common fixed point for generalized
	contraction mappings in generalized metric spaces. J. Nonlinear Convex Anal.
	Volume 19, Number 7, (2018) 1275-1285
	
	\bibitem{LatifNazir} Latif, A., Nazir, T., Abbas, M.: Stability of fixed
	points in generalized metric spaces. J. Nonlinear Variational Anal. \textbf{2}(3), 287-294 (2018)     
	
	\bibitem{ZM01} Mustafa Z., Sims, B.: Some remarks concerning $D$-metric
	spaces. Proc. Int. Conf. on Fixed Point Theory and Application Valencia
	(Spain), July, 189--198 (2003)
	
	\bibitem{ZM02} Mustafa, Z., Sims, B.: A new approach to generalized metric
	space. J. of Nonlinear Convex Anal. 7 (2), 289--297 (2006)
	
	\bibitem{ZM03} Mustafa, Z., Obiedat, H. Awawdeh, F.: Some fixed point
	theorem for mapping on complete $G$-metric spaces. Fixed Point Theory
	Application. Article ID 189870, 1--12 (2008)
	
	\bibitem{ZM04} Mustafa, Z., Sims, B.: Fixed point theorems for contractive
	mapping in complete $G$-metric spaces. Fixed Point Theory Application.
	Article ID 917175, 1--10 (2009)
	
	\bibitem{ZM05} Mustafa, Z., Shatanawi, W., Bataineh, M.: Existance of Fixed
	point Results in $G$-metric spaces. Inter. J. Math. Math. Sci. Article ID
	283028, 1--10 (2009)
	
	\bibitem{ZMe} Mustafa, Z., Awawdeh, F., Shatanawi, W.: Fixed point theorem
	for expansive mappings in $G$-metric spaces. Int. J. Contemp. Math.
	Sciences 5, 2463--2472 (2010)
	
	\bibitem{Saadati} Saadati, R., Vaezpour, S. M., Vetro, P., Rhoades, B. E.:
	Fixed point theorems in generalized partially ordered $G$-metric spaces.
	Mathematical Computer Modelling Vol. 52, Issues 5-6, 797--801 (2010)
	
	\bibitem{Sessa} Sessa, S.: On a weak commutativity condition of mappings in
	fixed point consideration. Publ. Inst. Math. Soc. 32, 149--153 (1982)
	
	\bibitem{Wasfi} Shatanawi, W.: Fixed point theory for contractive mappings
	satisfying $\Phi$-maps in $G$-metric spaces. Fixed Point Theory
	Application. Vol. 2010, Article ID 181650, 1--9 (2010)
	
	\bibitem{Wasfi2} Shatanawi, W.: Coupled fixed point theorems in generalized
	metric spaces. Hacettepe Journal Math. Stats. 40 (3), 441--447 (2011)

\end{thebibliography}
\end{document}